\definecolor{medgray}{gray}{0.6}
\newcommand{\ignore}[1]{}
\newcommand{\xx}{\mathbf x}
\newcommand{\dd}{\mathbf d}
\newcommand{\ttt}{{\lfloor t/2\rfloor}}
\newcommand{\PR}{\oR[\xx]}
\newcommand{\QPR}{\oR[\xx]/I}
\newcommand{\DQPR}{(\oR[\xx]/I)^*}
\newcommand{\oT}{\mathbb T}
\newcommand{\Ker}{{\rm Ker\,}}
\newcommand{\Span}{{\rm Span}}
\newcommand{\rank}{{\rm rank}}
\newcommand{\vect}{{\rm vec}}
\newcommand{\oR}{\mathbb R}
\newcommand{\oK}{\mathbb K}
\newcommand{\oN}{\mathbb N}
\newcommand{\oC}{\mathbb C}
\newcommand{\BB}{{\mathcal B}}
\newcommand{\CC}{{\mathcal C}}
\newcommand{\DD}{{\mathcal D}}
\newcommand{\GG}{{\mathcal G}}
\newcommand{\HH}{{\mathcal H}}
\newcommand{\KK}{{\mathcal K}}
\newcommand{\NN}{{\mathcal N}}
\newcommand{\NNN}{{\mathcal N}}
\newcommand{\PP}{{\mathcal P}}
\newcommand{\SSS}{{\mathcal S}}
\newcommand{\WW}{{\mathcal W}}
\newcommand{\XX}{{\mathcal X}}
 \newtheorem{theorem}{\sc Theorem}
 \newtheorem{definition}[theorem]{\sc Definition}
 \newtheorem{lemma}[theorem]{\sc Lemma}
 \newtheorem{example}[theorem]{\sc Example}
 \newtheorem{proposition}[theorem]{\sc Proposition}
 \newtheorem{corollary}[theorem]{\sc Corollary}
  \newtheorem{remark}[theorem]{\sc Remark}
\begin{document}



\title[Real and complex prolongation-projection algorithms]{A prolongation-projection algorithm for computing the finite real variety of an ideal}

\author{Jean Lasserre}
\address{LAAS-CNRS\\7 Avenue du Colonel Roche\\31 077 Toulouse, France}
\curraddr{}
\email{lasserre@laas.fr}

\author{Monique Laurent}
\address{CWI\\P.O. Box 94079\\1090 GB Amsterdam, The Netherlands}
\email{M.Laurent@cwi.nl}

\author{Philipp Rostalski}
\address{Automatic Control Lab.\\ETH Zurich\\Physikstrasse 3\\8092 Zurich, Switzerland}
\email{rostalski@control.ee.ethz.ch}



\begin{abstract}
We provide a real algebraic symbolic-numeric algorithm for computing the real variety
$V_\oR(I)$ of an ideal $I\subseteq \PR$, assuming $V_\oR(I)$ is finite (while $V_\oC(I)$ could be infinite). 
Our approach uses sets of linear functionals on $\PR$, vanishing on a given set of polynomials generating $I$ and their prolongations up to a given degree, as well as on polynomials of the real radical ideal  $\sqrt[\oR] I$ obtained 
from the kernel of a suitably defined moment matrix assumed to be positive semidefinite and of maximum rank.
 We formulate a condition on the dimensions of projections of these sets of linear functionals, 
 which serves as stopping criterion for our algorithm; this new criterion is satisfied earlier than the previously used stopping criterion based on a rank condition for moment matrices.
This algorithm is based on standard numerical linear algebra routines and semidefinite optimization and
combines techniques from previous work of the authors together with an existing algorithm for the complex variety. 


\end{abstract}

\maketitle



\section{Introduction}
\label{sec::intro}
Polynomial equations play a crucial role in mathematics and are widely used in an emerging number of modern applications. Recent years have witnessed a new trend in algebraic geometry and polynomial system,
namely numerical polynomial algebra~\cite{St04} or numerical algebraic geometry~\cite{SW05}. Algorithms in this field deal with the problem of (approximately) computing objects of interest in the classical area of algebraic geometry with a focus on polynomial root finding. 

There is a broad literature for the problem of computing complex roots, 
that deals with numerical and symbolic algorithms, ranging from numerical continuation methods as in e.g. Verschelde ~\cite{Ve99} to exact methods as in e.g. Rouillier \cite{Rouillier1999a}, or more general Gr\"obner or border bases methods; see e.g. the monograph \cite{DiEm05} and the references therein.

In many practical applications, one is only interested in the {\it real}
 solutions of a system of polynomial equations, possibly satisfying additional polynomial inequality constraints. 
An obvious approach for finding all {real} roots of a system of polynomial equations is to first compute all {complex} solutions, i.e., the algebraic variety $V_\oC(I)$ of the associated ideal $I\subseteq\oR[\xx]$, and then to sort the real variety $V_\oR(I)=\oR^n \cap V_\oC(I)$ from $V_\oC(I)$ afterwards. 
However, in many practical instances, the number of real roots is considerably smaller than the total number of roots and, in some cases, it is finite 
while $\vert V_\oC(I)\vert = \infty$.

The literature about algorithms tailored to the problem of {real solving} systems of 
polynomial equations is by far not as broad as for the problem of computing complex roots.
Often local Newton type methods or subdivision methods based on Descartes rule of sign, on Sturm-Habicht sequences or on Hermite quadratic forms are used; see e.g.~\cite{Basu2003,Mourrain2005a,PMRQT07} for a discussion. 
In \cite{LLR07} we gave  an algorithm for finding $V_\oR(I)     $ (assumed to be finite),
and  a semidefinite characterization as well as a border (or Gr\"obner) basis of the real radical
 ideal $\sqrt[\oR]I$, by using linear algebra combined with semidefinite programming (SDP) techniques. 
 We exploited the fact that {all} information needed to compute the above objects is contained in the so-called {moment matrix} (whose entries depend on the polynomials generating the ideal $I$) and the geometry behind when this matrix is required to be positive semidefinite with maximum rank.
 We use the name {\em (real-root) moment-matrix algorithm}
 for the algorithm proposed in \cite{LLR07}. This algorithm was later extended to the computation of all complex roots in 
\cite{LLR08}. 
A feature of the real-root moment-matrix algorithm is that it requires to solve a sequence of SDP problems involving matrices of increasing size until a certain rank condition is satisfied. 
Solving the SDP problem is the computationally most demanding task in the algorithm. It is thus important to be able
to terminate the algorithm at an as early as possible stage 
so that the size of the matrices does not grow too much.
This is the motivation for the present paper where we present a new stopping condition, which is satisfied at least as early as the rank condition of \cite{LLR07} (and often earlier on examples).
This leads to a new algorithm which we name  {\em (real-root)
 prolongation-projection algorithm} since
its stopping condition involves computing the dimensions      of projections of certain sets of linear functionals on spaces of polynomials. 
This new algorithm arises by incorporating several ideas of \cite{LLR07,LLR08}  into an existing symbolic-numeric solver dedicated to compute $V_\oC(I)$ (as described  e.g. in~\cite{ReZh04}). A detailed description will be given in Section \ref{sec::algorithm} but, in order
to ease comparison with the moment-matrix method of \cite{LLR07},
we now give  
 a brief sketch of both methods.

\subsection*{Sketch of the real-root moment-matrix and prolongation-projection algorithms}
While methods based on Gr\"obner bases work with the (primal) ring of polynomials $\PR$, its ideals and their associated quotient spaces, we follow a dual approach here. The algorithms proposed in \cite{LLR07} and in this work manipulate specific subspaces of $(\PR)^*$, the space of linear forms dual to the ring of multivariate polynomials.

We denote by $(\PR_t)^*$ the space of linear functionals on the set $\PR_t$ of polynomials with degree at most $t$ and use the notion of {\em moment matrix} 
$M_s(L):=(L(\xx^\alpha\xx^\beta))$ (indexed by monomials of degree at most $s$) for 
$L\in (\PR_{2s})^*$. (See Section \ref{sec::preliminaries} for more definitions.)
Say we want to compute the (finite) real variety $V_\oR(I)$ of an ideal $I$ given by a set of generators 
$h_1,\ldots,h_m\in\PR$ with maximum degree $D$.
A common step in both methods is to compute a maximum rank moment matrix $M_\ttt(L)$, where $L\in (\PR_t)^*$ 
vanishes on the set $\HH_t$ of all prolongations up to degree $t$ of the polynomials $h_j$; this step
is carried out with a numerical algorithm for semidefinite 
 optimization. From that point on both methods use distinct strategies. 
In the moment-matrix method one checks 
whether the  rank condition: $\rank M_s(L)=\rank M_{s-1}(L)$ 
holds for some 
 $D\le s\le \ttt$; if so, then one can conclude that $\sqrt[\oR]I$ is generated by the polynomials in the kernel
of $M_s(L)$ and extract $V_\oR(I)$; if not, iterate with $t+1$.
In the prolongation-projection algorithm,
one considers $\GG_t$, the set  obtained by adding to $\HH_t$ prolongations of the polynomials in the kernel of $M_\ttt(L)$, 
its border $\GG_t^+:=\GG_t \cup_i \xx_i\GG_t$, as well as 
the set $\GG_t^\perp$ of linear functionals on $\PR_t$ vanishing on $\GG_t$,
 and its projections $\pi_s(\GG_t^\perp)$ on various degrees $s\le t$.
We give conditions on the dimension of these linear subspaces ensuring the computation of the real variety $V_\oR(I)$ and generators for the real radical ideal $\sqrt[\oR]I$.
Namely, if $\dim \pi_s(\GG_t^\perp)=\dim\pi_{s-1}(\GG_t^\perp)=\dim\pi_s((\GG_t^+)^\perp)$ holds for some $D\le s\le t$, then one can compute an ideal $J$ nested between $I$ and $\sqrt [\oR]I$ so that $V_\oR(I)=V_\oR(J)$, with equality $J=\sqrt[\oR]I$ if $\dim \pi_s(\GG_t^\perp)=|V_\oR(I)|$; if not, iterate with $t+1$.

Both algorithms are tailored to finding real roots and terminate assuming $V_\oR(I)$ finite (while $V_\oC(I)$ could be infinite).
However, the order $t$ at which the dimension condition holds is at most the order at which the rank condition holds. Hence
 the prolongation-projection algorithm terminates earlier than the moment-matrix method, which often permits to save some semidefinite optimization step with a larger moment matrix (as shown on a few examples in Section \ref{sec::numexamples}).

 \ignore{
\subsection*{Contribution}
  This latter remark is the motivation for the present paper.  We incorporate several ideas of \cite{LLR07,LLR08} into an existing symbolic-numeric solver 
dedicated to compute $V_\oC(I)$ (as described e.g. in~\cite{ReZh04}) to
define a new algorithm tailored to real root finding.
 Among its distinguishing features, (a) the new algorithm terminates assuming $V_\oR(I)$ is finite 
(while $V_\oC(I)$ could be infinite) and, (b) it terminates at least as early 
as the moment-matrix algorithm of \cite{LLR07}; hence one may expect to handle positive semidefinite moment matrices of smaller size. We use the name {\em real-root prolongation-projection algorithm} for this new algorithm, since it relies on computing the dimensions of projections of certain sets of linear functionals on sets of polynomials obtained by prolongating a given set of generators of the ideal $I$ 
as well as certain polynomials in its real radical ideal $\sqrt [\oR]I$. 
For zero-dimensional ideals, a complex version 
of this prolongation-projection algorithm for computing $V_\oC(I)$, directly inspired from \cite{ReZh04}, can be found in \cite{LLR08} .

Methods based on Gr\"obner bases might be called {\it primal} because they work on $\PR$,
 the ring of polynomials, its ideals and their associated quotient spaces. 
Instead, our approach as well as that in \cite{LLR07} can be called {\it dual} since they both
work on some specific subspaces of $\PR^*$, the set of linear functionals on $\PR$.
 As a matter of fact, we also provide a semidefinite characterization of 
$\mathcal{D}[\sqrt[\oR]{I}]$, the dual of the real radical ideal of $I$ which consists
of all linear functionals on $\PR$ vanishing on $\sqrt[\oR]{I}$,
 as well as its finite-dimensional projections.

Let us now give a brief sketch of the moment-matrix approach of \cite{LLR07} and of the prolongation-projection algorithm 
in order to illustrate their comparison.
}

\subsection*{Contents of the paper}
Section \ref{sec::preliminaries} provides some basic background on polynomial ideals and moment matrices whereas Section \ref{sec::principles} presents the basic principles behind the prolongation-projection method and
Theorem~\ref{theoZR}, our main result, provides a new stopping criterion for the computation of $V_\oR(I)$. Section \ref{sec::links} relates the prolongation-projection algorithm 
to the moment-matrix method of~\cite{LLR07}. In particular, Proposition \ref{prop2} shows that the rank condition used as stopping criterion in the moment-matrix method is equivalent to a strong version of the new stopping criterion;
as a consequence the new criterion is satisfied at least as early as the rank condition (Corollary \ref{corlink}).
Section \ref{sec::algorithm} contains  a detailed description of the algorithm
whose behavior is illustrated on a few examples in Section \ref{sec::numexamples}.

\section{Preliminaries}
\label{sec::preliminaries}
\subsection{Polynomial ideals and varieties}
We briefly introduce some notation and preliminaries for polynomials 
used throughout the paper and refer e.g. to \cite{CLO97} and \cite{CLO98} for more details.


Throughout $\PR:=\oR[x_1,\ldots,x_n]$ is the
ring of real polynomials in the $n$ variables $\xx=(x_1,\ldots,x_n)$ and 
 $\PR_t$ is the subspace of polynomials of degree at most $t\in \oN$.
For $\alpha\in \oN^n$, $\xx^\alpha=x_1^{\alpha_1}\cdots x_n^{\alpha_n}$ is the monomial with exponent $\alpha$ and degree
$|\alpha|=\sum_i\alpha_i$. For an integer $t\ge 0$,
 the set $\oN^n_t=\{\alpha\in\oN^n\mid |\alpha|\le t\}$ corresponds to the set of monomials of degree at most $t$, and
$\oT^n=\{\xx^\alpha\mid \alpha\in\oN^n\}, \ \ \oT^n_t=\{\xx^\alpha\mid \alpha\in \oN^n_t\}$
 denote the set of all monomials and of all monomials of degree at most $t$, respectively.
Given $S\subseteq \PR$, set $x_i S:=\{x_ip\mid p\in S\}$. The set
$$S^+:=S\cup x_1S\cup \ldots \cup x_nS$$
denotes the one degree \emph{prolongation} of $S$ and, for $\BB\subseteq \oT^n$,
$\partial \BB := \BB^+\setminus\BB$ is called the set of \emph{border} monomials of $\BB$.
The set $\BB$ is said to be {\em connected to 1} if any $m\in \BB$ can be written as 
$m=m_1\cdots m_k$ with $m_1=1$ and $m_1\cdots m_h\in \BB$ for all $h=1,\ldots,k$. For instance, $\BB$ is connected to 1 if it is closed under taking divisions, i.e. $m\in\BB$ and $m'$ divides $m$ implies $m'\in\BB$.

\medskip
Given $h_1, \ldots, h_m \in\PR$, $I=(h_1,\ldots,h_m)$ 
%
%
is the ideal generated by $h_1,\ldots,h_m$,
its algebraic variety is 
%
\begin{align*}
 V_{\oC}(I) := \left\{ v \in \oC^n \mid h_j(v) = 0\; \forall j=1,\ldots,m) \right\}
\end{align*}
and its real variety is $V_\oR(I):=\oR^n \cap V_{\oC}(I)$. The ideal $I$ is zero-dimensional when $V_\oC(I)$ is finite. The vanishing ideal of a set $V\subseteq \oC^n$ is the ideal 
$$I(V):=\{f\in \PR\mid f(v)=0\ \forall v\in V\}.$$
The Real Nullstellensatz (see e.g. \cite[\S 4.1]{CLO97}) asserts that $I(V_\oR(I))$ coincides with $\sqrt[\oR]I$, the real radical of $I$ defined as
$$\sqrt[\oR]I:=\Big\{p\in \PR\:\big|\: p^{2m} +\sum_j q_j^2\in I \ \text{ for some }q_j\in \PR, m\in \oN\setminus \{0\}\Big\}.$$
%
%
%
%


%
Given a vector space $A$ on $\oR$, its dual vector space is the space $A^*={\rm Hom}(A,\oR)$ 
consisting of all linear functionals from $A$ to $\oR$.
Given $B\subseteq A$, set $B^\perp:=\{L\in A^*\mid L(b)=0 \ \forall b\in B\}$, and $\Span_\oR(B):=\{\sum_{i=1}^m\lambda_i b_i\mid \lambda_i\in\oR, b_i\in B\}$.
Then $\Span_\oR(B)\subseteq (B^\perp)^\perp$, with equality when $A$ is finite dimensional.

For an ideal $I\subseteq \PR$, the space 
$\DD[I]:=I^\perp= \{L\in (\PR)^*\mid L(p)=0 \ \forall p\in I\},$
considered e.g. by Stetter \cite{St04}, is isomorphic to 
$(\PR/I)^*$ and $\DD[I]^\perp=I$ when $I$ is zero-dimensional.
Recall that $I$ is zero-dimensional precisely when $\dim \PR/I<\infty$,
 and
$|V_\oC(I)|\le \dim \PR/I$ with equality precisely when $I=I(V_\oC(I))$.
%
%
%

The canonical basis of $\PR$ is the monomial set $\oT^n$, with
$\DD_n:=\{\dd_\alpha\mid\in\oN^n\}$ 
as  corresponding dual basis for $(\PR)^*$, where
$$\dd_\alpha(p) = \frac{1}{\prod_{i=1}^n \alpha_i!} 
\left(\frac{\partial^{|\alpha|}}{\partial x_1^{\alpha_1} \ldots \partial x_n^{\alpha_n}}p\right)(0)\ \ \text{ for } p\in\PR.$$
Thus any $L\in (\PR)^*$ can be written in the form
$L=\sum_\alpha y_\alpha \dd_\alpha$ (for some $y\in\oR^{\oN^n}$).

By restricting its domain to $\PR_s$, any linear form $L\in (\PR)^*$ gives a linear form $\pi_s(L)$ in $(\PR_s)^*$. Throughout we let $\pi_s$ denote this projection from $(\PR)^*$ (or from $(\PR_t)^*$ for any $t\ge s$) onto $(\PR_s)^*$.


\medskip
Given a zero-dimensional  ideal $I\subseteq \PR$,
a well known method for computing $V_\oC(I)$ is the so-called eigenvalue method which relies on the
following theorem relating the eigenvalues of the multiplication operators in $\PR/I$ to the points in $V_\oC(I)$. See 
e.g. \cite[Chapter 2\S4]{CLO98}.

\begin{theorem} \label{thm::muloperator}
 Let $I$ be a zero-dimensional ideal in $\PR$ and $h\in \PR$.
The eigenvalues of the multiplication operator 
$$\begin{array}{llll}
m_h: & \PR/I & \longrightarrow & \PR/I \cr
     &  p\mod I    & \mapsto & ph \mod I
\end{array}$$
are the evaluations $h(v)$ of the polynomial $h$ at the points $v \in V_\oC(I)$.
Moreover, given a basis $\BB$ of $\PR/I$, the eigenvectors of the matrix
of the adjoint operator of $m_h$ with respect to $\BB$ are 
the vectors 
$(b(v))_{b\in\BB}\in\oR^{|\BB|}$ (for all $v\in V_\oC(I)$).
\end{theorem}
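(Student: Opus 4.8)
The plan is to prove both assertions by first extending scalars to $\oC$ and then analysing the multiplication operator $m_h$ through the evaluation functionals at the points of $V_\oC(I)$, using the classical Nullstellensatz to pin down the spectrum. \emph{Reduction to $\oC$.} Put $I_\oC:=I\,\oC[\xx]$; tensoring $0\to I\to\PR\to\PR/I\to 0$ with $\oC$ over $\oR$ shows $\oC[\xx]/I_\oC\cong(\PR/I)\otimes_\oR\oC$, so any $\oR$-basis $\BB$ of $\PR/I$ is a $\oC$-basis of $\oC[\xx]/I_\oC$ and the matrix $M_h$ of $m_h$ in the basis $\BB$ is literally the same whether $m_h$ is viewed on $\PR/I$ or on $\oC[\xx]/I_\oC$. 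In particular the two operators have the same characteristic polynomial, hence the same eigenvalues and the same eigenvectors of the transpose $M_h^{T}$, so it suffices to argue for $m_h$ acting on $\oC[\xx]/I_\oC$.

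\emph{Every eigenvalue is some $h(v)$.} Consider $g:=\prod_{v\in V_\oC(I)}\bigl(h-h(v)\bigr)\in\PR$. It vanishes on $V_\oC(I)=V_\oC(I_\oC)$, so $g\in I(V_\oC(I_\oC))=\sqrt{I_\oC}$ by the Nullstellensatz (see \cite{CLO97}), whence $g^N\in I_\oC$ for some $N\ge 1$. Then the operator $\prod_{v\in V_\oC(I)}\bigl(m_h-h(v)\,\mathrm{Id}\bigr)^{N}=m_{g^{N}}$ is zero on $\oC[\xx]/I_\oC$; therefore the minimal polynomial of $m_h$ divides $\prod_{v\in V_\oC(I)}(T-h(v))^{N}$, and every eigenvalue of $m_h$ lies in $\{h(v):v\in V_\oC(I)\}$.

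\emph{Each $h(v)$ is an eigenvalue, with the stated eigenvector.} For $v\in V_\oC(I)$ define $\mathrm{ev}_v\colon\oC[\xx]/I_\oC\to\oC$ by $p\bmod I_\oC\mapsto p(v)$; this is well defined precisely because $h_j(v)=0$ for all $j$, and nonzero since $\mathrm{ev}_v(1)=1$. For every $p$ we have $\mathrm{ev}_v(m_h(p))=\mathrm{ev}_v(ph)=p(v)h(v)=h(v)\,\mathrm{ev}_v(p)$, i.e. $\mathrm{ev}_v\circ m_h=h(v)\,\mathrm{ev}_v$, so $\mathrm{ev}_v$ is a nonzero eigenvector of the adjoint of $m_h$ with eigenvalue $h(v)$; expressed in the basis dual to $\BB$ it is the coordinate vector $(b(v))_{b\in\BB}$, which is the asserted eigenvector of $M_h^{T}$. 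Hence $h(v)$ is an eigenvalue of $m_h$, and combined with the previous step the spectrum of $m_h$ equals $\{h(v):v\in V_\oC(I)\}$. (The vectors $(b(v))_{b\in\BB}$ for distinct $v\in V_\oC(I)$ are moreover linearly independent: a dependence among them, since $\BB$ spans $\PR/I$, would give a linear dependence among the functionals $\mathrm{ev}_v$ on all of $\PR$, contradicting polynomial interpolation; when $I$ is radical this exhausts the eigenvectors because then $|V_\oC(I)|=\dim\PR/I$.)

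The main obstacle is the ``no spurious eigenvalues'' direction: for a non-radical $I$ a dimension count does not suffice, and one must genuinely invoke the Nullstellensatz to pass from ``$g$ vanishes on $V_\oC(I)$'' to ``$g^{N}\in I_\oC$'', and then translate the resulting nilpotency of $g(m_h)$ into a constraint on the spectrum via the minimal polynomial. The scalar extension is routine but should be stated with care, since $m_h$ is a priori only an $\oR$-linear operator whereas its eigenvalues and eigenvectors live over $\oC$.
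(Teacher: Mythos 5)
The paper does not prove Theorem~\ref{thm::muloperator}; it is quoted as a classical result with a pointer to \cite[Chapter 2 \S 4]{CLO98}, so there is no in-paper proof to compare against. Your argument is correct and is essentially the standard textbook proof: complexify, use the Nullstellensatz to show that $g=\prod_{v\in V_\oC(I)}(h-h(v))$ is nilpotent modulo $I_\oC$ so that the minimal polynomial of $m_h$ divides $\prod_v(T-h(v))^N$ (hence no spurious eigenvalues), and exhibit the evaluation functionals $\mathrm{ev}_v$ as eigenvectors of the adjoint with eigenvalue $h(v)$, whose coordinate vectors in the dual basis are exactly $(b(v))_{b\in\BB}$. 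The only stylistic difference from the usual reference proof is that \cite{CLO98} excludes a spurious eigenvalue $\lambda$ by producing a Nullstellensatz certificate showing $m_{h-\lambda}$ is invertible, whereas you argue via the minimal polynomial; the two are equivalent in substance. You are also right to flag (as the theorem's wording glosses over) that the vectors $(b(v))_{b\in\BB}$ need not exhaust the eigenvectors of $M_h^T$ when $I$ is not radical, and your linear-independence remark and the reduction of the real operator to its complexification are handled correctly.
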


The extraction of the roots via the eigenvalues of the multiplication operators requires the knowledge of a basis of $\PR/I$
and an algorithm for reducing a polynomial $p \in \PR$ modulo the ideal $I$ in order to construct the multiplication matrices. 
Algorithms using Gr\"obner bases
can be used to perform this reduction by implementing a polynomial division algorithm (see \cite[Chapter 1]{CLO97}) or, as we will do in this paper, 
generalized normal form algorithms using border bases (see  \cite{LLR08}, \cite{Mourrain2005}, \cite{St04} for details).

\ignore{
 The set $\QPR$ of all cosets $[f]:=f+I=\{f+q \mid q \,\in\,I\}$ for $f\in \PR$, i.e. all equivalent classes of polynomials of $\PR$ modulo the ideal $I$, is a (quotient) algebra with addition $[f]+[g]:=[f+g]$, scalar multiplication $\lambda [f]:=[\lambda f]$
and with multiplication $[f][g]:=[fg]$, for $\lambda\in \oR$, $f,g\in \PR$. 
See e.g. \cite{CLO97}, \cite{St04} for a detailed treatment of the quotient algebra $\QPR$.
For zero-dimensional ideals, $\QPR$ is a finite-dimensional algebra whose dimension is related to the cardinality of $V_\oC(I)$.

\begin{theorem}\label{theodim}
For an ideal $I$ in $\PR$,
$|V_\oC(I)|<\infty \Longleftrightarrow
\dim \QPR<\infty.$
Moreover, $|V_\oC(I)|\le \dim\ \QPR$, with equality if and only if $I$ is radical.
\end{theorem}
Assume that $|V_\oC(I)|<\infty$ and let $N:=\dim \QPR\ge |V_\oC(I)|$. Let
 $\BB:=\{b_1,\ldots,b_N\}\subseteq \PR$ be such that
the cosets $[b_1],\ldots,$ $[b_N]$ are pairwise distinct and 
$\{[b_1],\ldots,[b_N]\}$ defines a linear basis
of $\QPR$. By abuse of language we also say that $\BB$ itself is a basis of
$\QPR$.
Then every $f\in \PR$ can be written in a unique way as
$f=\sum_{i=1}^N c_i b_i +p,$ where $c_i\in \oR,$ and $ p\in I.$
The polynomial $f - p = \sum_{i=1}^N c_i b_i$ is called the {\em normal form of $f$ modulo $I$ w.r.t. the basis $\BB$}.
In other words, the vector space $\Span_\oR(\BB):=\{\sum_{i=1}^N c_ib_i\mid c_i\in \oR\}$ is isomorphic to
$\QPR$.

Following Stetter \cite{St04} the dual space of an ideal $I\subseteq \PR$ is the set
\begin{align}\label{eqn::DI}
\DD[I]:=I^\perp= \{L\in (\PR)^*\mid L(p)=0 \ \forall p\in I\},
\end{align}
consisting of all linear functionals that vanish on $I$. Thus $\DD[I]$ is isomorphic to $\DQPR$ and, when $I$ is zero-dimensional,
\begin{align*}
\DD[I]^\perp = \{\:p\in \PR\:\mid\:L(p)=0 \quad\forall L\in \DD[I]\:\} = I.
\end{align*}
When $I$ is zero-dimensional and radical, the sum of the real and imaginary parts of the evaluations
at points $v\in V_\oC(I)$ form a basis of $\DD[I]$, that is,
\begin{align*}
\DD[I] = \Span_\oR \{Re \partial_0[v] + Im \partial_0[v]\mid v\in V_\oC(I)\}\,.
\end{align*}
Indeed, each linear map $Re \partial_0[v]+ Im \partial_0[v]$ for $v\in V_\oC(I)$ vanishes at all $p\in I$ and thus belongs to $\DD[I]$. Moreover, they are linearly independent and
$$\dim \DD[I]=\dim (\QPR)^* =\dim \QPR\,=|\,V_\oC(I)|$$ where the last equality holds because $I$ is zero-dimensional and radical (use Theorem \ref{theodim}). Hence,
$D[I]= \Span_\oR\{\partial_0[v]\mid v\in V_\oR(I)\}$ when 
$I$ is zero-dimensional and real radical.

\subsubsection*{Multiplication operators}
Given $h\in \PR$ we can define a {\em multiplication operator} (by $h$) as
\begin{equation} \label{mult}
\begin{array}{lccl}
\XX_h: & \QPR & \longrightarrow & \QPR\\
     &  [f] & \longmapsto &\XX_h([f])\,:=\, [hf]\,,
\end{array}
\end{equation}
with adjoint operator
\begin{align*}
\begin{array}{lccc}
\XX^\dagger_{h}: & \DQPR & \longrightarrow & \DQPR\\
     &  L & \longmapsto & L \circ \XX_h.
\end{array}
\end{align*}
Assume $N:=\dim \PR/I<\infty$. 
Given a basis $\BB$ of $\PR/I$, the multiplication 
operator $\XX_h$ can be represented by its matrix with respect to the basis $\BB$ which, for the sake of simplicity, will also be denoted by $\XX_h$,; then $(\XX_h)^T$ represents the adjoint operator $\XX^\dagger_{h}$ with respect to the dual basis of $\BB$.
Writing $hb_j/I = \sum_{i=1}^N a_{ij}b_i$, the $j$th column of $\XX_h$ is the vector
$(a_{ij})_{i=1}^N$. 
The following famous result (see e.g. \cite[Chapter 2\S4]{CLO98}) relates the eigenvalues of the multiplication operators in $\QPR$ to the algebraic variety $V_\oC(I)$.

\begin{theorem} \label{thm::muloperator}
 Let $I$ be a zero-dimensional ideal in $\PR$, $\BB$ a basis of $\QPR$, and $h\in \PR$.
The eigenvalues of the multiplication operator $\XX_h$ 
are the evaluations $h(v)$ of the polynomial $h$ at the points $v \in V_\oC(I)$.
Moreover, $(\XX_h)^T\zeta_{\BB,v}=h(v) \zeta_{\BB,v}$ for all $v\in V_\oC(I)$, where $\zeta_{\BB,v}$ is the vector
$(b(v))_{b\in\BB}\in\oR^{\BB}$.
\end{theorem}

\subsection{Normal form criterion} 

The extraction of the roots via the eigenvalues of the multiplication operators requires the knowledge of a basis of $\QPR$ 
and of an algorithm for reducing a polynomial $p \in \PR$ modulo the ideal $I$ in order to construct the multiplication matrices. 
Algorithms using Gr\"obner bases
can be used to perform this reduction by implementing a polynomial division algorithm, see \cite[Chapter 1]{CLO97}. In this paper we will use border bases (or so-called generalized normal form algorithms), \cite{St04}, \cite{Mourrain2005}. We now recall some results we will use in the paper.


%
%
\begin{definition}
A set $\BB\subseteq \oT^n$ is said to be \emph{connected to 1} if each $m \in \BB$ can be written as 
$m=m_1\cdots m_t$ with $m_1=1$ and $m_1\cdots m_{s} \in \BB$ ($s=1,\ldots,t$). 
Moreover, $\BB\subseteq \oT^n$ is said to be \emph{stable by division} 
if, for all $m\in \BB$, $m'\in \oT^n$, $ m'|m \, \Rightarrow \, m'\in \BB.$

\end{definition}

Let $\BB\subseteq \oT^n$ be given. Assume that, for each border monomial $m\in \partial \BB$, we are given some polynomials $r_m$ and $f_m$ of the form

\begin{align}
\label{eqn::fm}
 r_m:=\sum_{b\in\BB}\lambda_{m,b}\,b \:\in \Span_\oR(\BB)
\ \ (\lambda_{m,b}\in\oR) \ \ \ \text{ and } f_m:= m-r_m.
\end{align}
 The family
\begin{align}
\label{eqn::rewriting_fam}
F:=\{f_m\mid m\in \partial \BB\}
\end{align}
is called a 
\emph{rewriting family for $\BB$} in \cite{Mo07,Mourrain2005}, 
or a $\BB$-border prebasis in \cite[Chapter 4]{DiEm05} (note that $\BB$ is assumed there to be connected to 1 or stable by division respectively).
Thus a rewriting family enables expressing all monomials in $\partial\BB$ as linear combinations of monomials in $\BB$ modulo the ideal $( F)$. We mention some basic properties of rewriting families.

\begin{lemma} \label{lem1}
If $\BB$ contains the constant monomial $1$, then $\BB$ spans the vector space $\PR/(F)$.
\end{lemma}

\begin{proof}
As $1\in \BB$, we can define the following notion of `distance' $d(m)$ of a monomial $m$ to $\BB$.
 Namely, let $d(m)$ be the smallest degree of a monomial $m_0$ dividing $m$ and such that
$m/m_0\in \BB$. Thus $d(m)=0$ precisely when $m\in \BB$. 
One can easily verify that any monomial $m\notin\BB$ can be written as $m=x_im_1$ for some $i$ and some $m_1\in\oT^n$ in such a way that
 $d(m_1)=d(m)-1$.

We now show, using induction on $d(m)\ge 0$, that any monomial $m$ lies in $\Span_\oR(\BB)+(F)$, which will show the lemma. This is obvious if 
$m\in\BB$ or if $m\in \partial \BB$ (by the definition of the rewriting family $F$, since then $m=r_m+f_m$).
Assume $d(m)\ge 1$ and $m=x_im_1$ with $d(m_1)=d(m)-1$. By the induction assumption,
$m_1=\sum_{b\in\BB}\lambda_b b +p$ where $\lambda_b\in\oR$ and $p\in (F)$.
Hence $m=x_im_1=\sum_{b\in\BB}\lambda_b x_i b +x_ip$, where 
$x_ip\in (F)$ and 
each $x_ib$ lies in $\Span_\oR(\BB)+(F)$ as $x_ib\in \BB\cup \partial \BB$. Therefore,
$m \in \Span_\oR(\BB)+(F)$.
\end{proof}

\begin{lemma}\label{lem2}
Let $I\subseteq \PR$ be an ideal and $\BB\subseteq \oT^n$ be a basis of $\PR/I$ containing the constant monomial $1$.
If the rewriting family $F$ is contained in $I$, then $F$ generates $I$, in which case $F$ is called a {\em border basis} of $I$.
\end{lemma}

\begin{proof}
Let $p\in I$. By Lemma \ref{lem1}, $p=r+q$, where $r\in\Span_\oR(\BB)$ and $q\in (F)$.
Thus $p-q=r\in \Span_\oR(\BB)\cap I=\{0\}$. This implies $p=q\in (F)$.
\end{proof}
 
In view of Lemma \ref{lem1}, when $\BB$ contains the constant 1, any polynomial $p\in\PR$ can be written 
\begin{align} 
\label{eqn::reducerule}
p=r_p+ \sum_{m\in \partial \BB} u_m f_m \ \
\text{ where } r_p\in \Span_\oR(\BB),\ u_m\in \PR.
\end{align} 
Such a decomposition of $p$ in $\Span_\oR(\BB)+(F)$ is unique when
 $\BB$ is linearly independent in $\PR/(F)$, i.e. when $\BB$ is a basis of $\PR/(F)$. 
This latter condition is equivalent to requiring that every polynomial has a {\it unique} reduction via the rewriting family $F$ and so the reduction (\ref{eqn::reducerule}) does not depend on the order in which the rewriting rules taken from $F$ are applied. This is formalized in Theorem \ref{theoborder} below.
\smallskip

Given a rewriting family $F$ for $\BB$ as in (\ref{eqn::rewriting_fam}), 
we can define the linear operators $$\XX_i: \Span_\oR(\BB)\rightarrow \Span_\oR(\BB),$$
 by setting 
$$ \XX_i(b):= \left\{
\begin{array}{ll}
x_ib & \text{ if } x_ib\in \BB\\
x_ib-f_{x_ib}=r_{x_ib} & \text{ otherwise}
\end{array}
\right.
$$
for $b\in \BB$ (recall (\ref{eqn::fm})) 
and extending $\XX_i$ to $\Span_\oR(\BB)$ by linearity.
 Denote also by $\XX_i$ the matrix of this linear operator w.r.t. $\BB$. Thus
$\XX_i$ can be seen as a \emph{formal multiplication (by $x_i$) matrix}.
Note that the matrices $\XX_i$ contain precisely the same information as the rewriting family $F$. 
The next result of \cite{Mou99} shows that, when $\BB$ is connected to 1, 
 the pairwise commutativity of the $\XX_i$'s 
is sufficient to ensure uniqueness of the decomposition (\ref{eqn::reducerule}).

\begin{theorem}\cite{Mou99}\label{theoborder} 
Let $\BB\subseteq\oT^n$ be connected to 1, let $F$ be a rewriting family for $\BB$, with associated formal multiplication matrices $\XX_1,\ldots,\XX_n$,
and let $J=(F)$ be the ideal generated by $F$. The following conditions are equivalent.
\begin{description}
	\item [(i)] The matrices $\XX_1,\ldots,\XX_n$ commute pairwise.
	\item [(ii)] $\PR= \Span_\oR(\BB) \oplus J,$ i.e. $\BB$ is a basis of $\PR/J$.
\end{description}
Then $F$ is a border basis of the ideal $J$,
 and the matrix $\XX_i$ represents the multiplication operator
$\XX_{x_i}$ of $\PR/J$ with respect to the basis $\BB$.
\end{theorem}



}

\subsection{Moment matrices}

Given $L\in (\PR)^*$, let $Q_L$ denote the quadratic form on $\PR$ defined by
$Q_L(p):=L(p^2)$ for $p\in \PR$. $Q_L$ is said to be positive semidefinite, written as $Q_L\succeq 0$, if $Q_L(p)\ge 0$ for all $p\in \PR$.
Let $M(L)$ denote the matrix associated with $Q_L$ in the canonical monomial basis of $\PR$, with $(\alpha,\beta)$-entry 
$L(\xx^\alpha \xx^\beta)$ for $\alpha,\beta\in \oN^n$, so that
$$Q_L(p)=\sum_{\alpha,\beta\in \oN^n}p_\alpha p_\beta L(\xx^\alpha \xx^\beta)= \vect(p)^T M(L)\vect(p),$$
where $\vect(p)$ is the vector of coefficients of $p$ in the monomial basis $\oT^n$.
Then $Q_L\succeq 0$ if and only if the matrix $M(L)$ is positive semidefinite.
For a polynomial $p\in\PR$, $p\in \Ker Q_L$ (i.e. $Q_l(p) = 0$ and so $L(pq)=0$ for all
$q\in\PR$) if and only if $M(L)\vect(p)=0$. Thus we may identify
$\Ker M(L)$ with a subset of $\PR$, namely we say that a polynomial
 $p\in \PR$ lies in $\Ker M(L)$ if $M(L)\vect(p)=0$. Then $\Ker M(L)$ is an ideal in $\PR$, which is real radical when 
$M(L)\succeq 0$ (cf. \cite{Lau05}, \cite{Moe04}).
For an integer $s\ge 0$, 
$M_s(L)$ denotes the principal submatrix of $M(L)$ indexed by $\oN^n_s$. 
Then, in the canonical basis of $\PR_s$, 
 $M_s(L)$ is the matrix of the restriction of $Q_L$ to $\PR_s$, and $\Ker M_s(L)$ can be viewed as a subset of $\PR_s$.
It follows from an elementary property of positive semi\-def\-inite matrices that 
\begin{align}\label{psdker}
 M_t(L)\succeq 0 \Longrightarrow \Ker M_t(L)\cap\PR_s =\Ker M_s(L) \ \ \text{ for }
1\le s\le t,
\end{align}
\begin{align}\label{psdkerb}
M_t(L),\ M_t(L')\succeq 0\Longrightarrow 
\Ker M_t(L+L')=\Ker M_t(L)\cap \Ker M_t(L').
 \end{align}
We now recall some results about moment matrices which played a central role in our previous work \cite{LLR07} and 
are used here again.

\begin{theorem}\label{theoCF}\cite{CF96}
Let $L\in (\PR_{2s})^*$. If $\rank M_s(L)=\rank M_{s-1}(L),$ then
there exists (a unique) $\tilde L\in (\PR)^*$ such that
$\pi_{2s}(\tilde L)=L$, $\rank M(\tilde L)=\rank M_s(L)$, and
$\Ker M(\tilde L)=( \Ker M_s(L))$.
\end{theorem}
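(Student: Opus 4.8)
The plan is to prove the flat extension theorem of Curto and Fialkow \cite{CF96} by an explicit construction. Starting from $L\in(\PR_{2s})^*$ with $\rank M_s(L)=\rank M_{s-1}(L)$, the first step is to fix a basis $\BB\subseteq\oT^n_{s-1}$ of the column space of $M_{s-1}(L)$; by the rank equality, $\BB$ is also a basis of the column space of $M_s(L)$, so every monomial $m\in\oT^n_s$ admits a unique expression $m\equiv r_m \pmod{\Ker M_s(L)}$ with $r_m\in\Span_\oR(\BB)$, giving a rewriting family $F=\{f_m=m-r_m\}$ for $\BB$ on the border $\partial\BB$ (here using that $\BB$ can be taken connected to $1$, in fact stable under division, since $\Ker M_s(L)$ is an ideal when $M_s(L)\succeq0$ — more generally one argues directly). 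Extending these rewriting rules recursively to all monomials produces, for each monomial $\xx^\gamma$, a normal form $\overline{\xx^\gamma}\in\Span_\oR(\BB)$, and one defines $\tilde L\in(\PR)^*$ by $\tilde L(\xx^\gamma):=L(\overline{\xx^\gamma})$ on monomials of degree $>2s$ and $\tilde L:=L$ on $\PR_{2s}$ (these agree on the overlap). This is the unique candidate, which already settles uniqueness.

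The second step is to check that $\pi_{2s}(\tilde L)=L$, which is immediate, and that $M(\tilde L)\succeq0$ with $\Ker M(\tilde L)\supseteq(\Ker M_s(L))$. Containment of the ideal generated by $\Ker M_s(L)$ in $\Ker M(\tilde L)$ should follow from the construction: each $f_m$ lies in $\Ker M(\tilde L)$ by definition of the normal form, and $\Ker M(\tilde L)$ being an ideal (once we know $M(\tilde L)\succeq0$) it contains $(F)$; conversely one shows $(F)=(\Ker M_s(L))$ using Lemma~\ref{lem1} applied to the ideal $\Ker M_s(L)$, which is connected to $1$. The positive semidefiniteness of $M(\tilde L)$ is where the real content lies and should be handled via the multiplication operators: define the maps $\XX_i$ on $\Span_\oR(\BB)$ by the rewriting family, show they commute pairwise (commutativity of $\XX_i$ and $\XX_j$ restricted to $\oT^n_{s-1}$ follows from the symmetry and flatness of $M_s(L)$, and then propagates to all of $\Span_\oR(\BB)$), so that $\Span_\oR(\BB)\cong\PR/(F)$ is a genuine quotient algebra of dimension $\rank M_s(L)$. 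Then $M(\tilde L)$ factors through this finite-dimensional quotient, so $\rank M(\tilde L)=\dim\Span_\oR(\BB)=\rank M_s(L)$, and the Gram-matrix structure of $M(\tilde L)$ together with $M_s(L)\succeq0$ forces $M(\tilde L)\succeq0$.

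Finally, from $M(\tilde L)\succeq0$ we get that $\Ker M(\tilde L)$ is an ideal, and combining $\rank M(\tilde L)=\rank M_s(L)$ with the containment $(\Ker M_s(L))\subseteq\Ker M(\tilde L)$ and the fact that $\PR/(\Ker M_s(L))$ already has dimension $\rank M_s(L)$ (again Lemma~\ref{lem1}), we conclude $\Ker M(\tilde L)=(\Ker M_s(L))$ by a dimension count. I expect the main obstacle to be the pairwise commutativity of the formal multiplication matrices $\XX_i$: establishing it requires carefully exploiting the flatness hypothesis $\rank M_s(L)=\rank M_{s-1}(L)$ to move entries of $M_s(L)$ around (the identity $L(x_ix_j\,\xx^\alpha\,b)$ read two ways for $b\in\BB$, $|\alpha|$ small), and checking that degree bookkeeping never pushes us outside the region where $L$ is defined. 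Everything else is bookkeeping with rewriting families and standard positive-semidefinite matrix facts such as \eqref{psdker}.
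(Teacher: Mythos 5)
The paper does not prove Theorem~\ref{theoCF}: it is cited from Curto and Fialkow \cite{CF96} (see also \cite{Lau05}) and used as a black box, so there is no in-paper argument to compare against. Evaluating your sketch on its own, it does follow the standard flat-extension construction (column basis $\BB$, rewriting rules from the rank equality, formal multiplication operators, recursive extension of $L$), and you correctly identify the pairwise commutativity of the $\XX_i$ as the crux. But that step is only flagged, not carried out, and it is precisely where the flatness hypothesis $\rank M_s(L)=\rank M_{s-1}(L)$ has to be exploited through a careful degree-bookkeeping argument; without it the recursive definition of $\tilde L(\xx^\gamma)$ via normal forms is not known to be well-posed, so the "these agree on the overlap'' claim and the factorization of $M(\tilde L)$ through $\Span_\oR(\BB)$ both remain open.

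There is also a structural problem: you have woven $M_s(L)\succeq 0$ and $M(\tilde L)\succeq 0$ into the argument, but the theorem carries no positivity hypothesis and makes no positivity conclusion. Concretely, the claim that $\Ker M(\tilde L)$ "is an ideal once we know $M(\tilde L)\succeq 0$'' misstates the facts — $\Ker M(\HH)$ is always an ideal of $\PR$ for any $\HH\in(\PR)^*$, PSD being needed only for real-radicality (the paper says so in Section~2.2) — while the statement "the positive semidefiniteness of $M(\tilde L)$ is where the real content lies'' is simply not what the theorem asserts, and is false in general when $M_s(L)$ is not PSD. This is not cosmetic: in the proof of Proposition~\ref{prop2} the theorem is applied to arbitrary $L\in\GG_t^\perp$ satisfying the rank condition, and such $L$ need not lie in $\KK_{t,\succeq}$, so a proof that only works for PSD moment matrices would not serve the paper. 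You would need to establish commutativity and the ideal identity $\Ker M(\tilde L)=(\Ker M_s(L))$ using only the rank equality, with positivity playing no role.
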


\begin{theorem}\label{theoA}(cf. \cite{LLR07,Lau05})
Let $L\in (\PR)^*$. If $M(L)\succeq 0$ and $\rank M(L)=\rank M_{s-1}(L)$, then 
$\Ker M(L)=( \Ker M_s(L)) $ is a zero-dimensional real radical ideal 
and $|V_\oC(\Ker M(L))|=\rank M(L)$.
\end{theorem}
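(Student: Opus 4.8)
The plan is to combine Theorem \ref{theoCF} with the properties of positive semidefinite moment matrices recorded in \eqref{psdker}, together with the characterization of $\Ker M(L)$ as a real radical ideal when $M(L)\succeq 0$. First I would observe that, since $M(L)\succeq 0$, all principal submatrices $M_s(L)$ are positive semidefinite, and by \eqref{psdker} we have $\Ker M_t(L)\cap\PR_s=\Ker M_s(L)$ for all $s\le t$. In particular $\rank M(L)=\rank M_{s-1}(L)$ forces $\rank M_s(L)=\rank M_{s-1}(L)$ as well (the chain of ranks $\rank M_{s-1}(L)\le\rank M_s(L)\le\rank M(L)$ collapses). So Theorem \ref{theoCF}, applied with $L$ replaced by $\pi_{2s}(L)$, yields a (unique) extension $\tilde L$ with $\Ker M(\tilde L)=(\Ker M_s(L))$; but by uniqueness of the flat extension $\tilde L$ coincides with (the restriction to $\PR_{2s}$-data of) $L$ itself, hence $\Ker M(L)=(\Ker M_s(L))$ and $\rank M(L)=\rank M_s(L)$. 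This gives the first displayed equality.

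Next I would show $\Ker M(L)$ is a zero-dimensional ideal. It is an ideal: the text already notes that $\Ker M(L)$ is an ideal in $\PR$ (if $M(L)\vect(p)=0$ then $L(pq)=0$ for all $q$, hence $L((x_ip)q)=L(p(x_iq))=0$, so $x_ip\in\Ker M(L)$). For zero-dimensionality, I would argue that $\PR/\Ker M(L)$ has dimension equal to $\rank M(L)$. Indeed, the quadratic form $Q_L$ descends to a nondegenerate form on $\PR/\Ker M(L)$ (by the very definition of $\Ker M(L)=\Ker Q_L$), so $\dim \PR/\Ker M(L)=\rank M(L)<\infty$, proving $\Ker M(L)$ is zero-dimensional. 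Since $M(L)\succeq 0$, the ideal $\Ker M(L)$ is real radical by the cited results of \cite{Lau05,Moe04}.

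Finally, for the cardinality statement $|V_\oC(\Ker M(L))|=\rank M(L)$: a zero-dimensional real radical ideal is in particular radical, so by the preliminaries ($|V_\oC(J)|=\dim\PR/J$ when $J=I(V_\oC(J))$, which holds for radical zero-dimensional $J$) we get $|V_\oC(\Ker M(L))|=\dim\PR/\Ker M(L)=\rank M(L)$. The main obstacle I anticipate is the careful bookkeeping in the first paragraph — verifying that the flat extension $\tilde L$ furnished by Theorem \ref{theoCF} really is $L$ on all of $\PR$ and not merely agrees with $L$ up to degree $2s$; this requires using both the uniqueness in Theorem \ref{theoCF} and the fact that $M(L)$ is itself a flat (rank-preserving) extension of $M_s(L)$, so that $L$ satisfies the hypotheses that pin down $\tilde L$ uniquely. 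The real-radicality input is a black box quoted from \cite{Lau05,Moe04}, so the nondegeneracy/dimension-counting argument is the only genuinely self-contained piece.
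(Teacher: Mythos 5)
Your proposal is correct, and since the paper itself gives no proof of Theorem~\ref{theoA} (it is quoted from \cite{LLR07,Lau05}), the right benchmark is the standard argument in those references — which is exactly what you reconstruct: the rank chain $\rank M_{s-1}(L)\le\rank M_s(L)\le\rank M(L)$ collapses, so $L$ is itself the unique flat extension furnished by Theorem~\ref{theoCF}, giving $\Ker M(L)=(\Ker M_s(L))$, and then finite rank of $Q_L$ gives $\dim\PR/\Ker M(L)=\rank M(L)<\infty$, real radicality comes from the quoted kernel property of positive semidefinite moment matrices, and the cardinality claim follows from radicality plus zero-dimensionality. You also correctly flag the one point that needs care — that the uniqueness in Theorem~\ref{theoCF} applies because $L$ satisfies the rank-preserving hypothesis, not merely agrees with $\tilde L$ up to degree $2s$ — so no gaps remain.
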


\section{Basic principles for the prolongation-projection algorithm}
\label{sec::principles}

We present here  the results  underlying 
the prolongation-projection algorithm for computing
$V_\oK(I)$, $\oK=\oR,\oC$.
The basic techniques behind this section originally stem from the treatment of partial differential equations.
Zharkov et al.~\cite{ZYB94,ZYB93} were the first to apply these techniques to polynomial ideals. 
Section \ref{secZR1} contains the main result (Theorem \ref{theoZR}).
The complex case is inspired from \cite{ReZh04} and was treated in \cite{LLR08}. 
The real case goes along the same lines, so we only give a brief  sketch of proof in 
Section \ref{secsketch}.
In Section \ref{secconcrete} 
we indicate a natural choice for the polynomial system $\GG$ involved 
in Theorem \ref{theoZR}, which is based on ideas of \cite{LLR07} and 
 will be used in the  prolongation-projection algorithm. 


\subsection{New stopping criterion based on prolongation/projection dimension conditions}\label{secZR1}

We state the main result on which the prolongation-projection algorithm is based. We give a unified 
formulation for both complex/real cases.

\begin{theorem} \label{theoZR} 
Let $I=(h_1,\ldots,h_m)$ be an ideal in $\PR$, $D=\max_j\deg(h_j)$ and $s,t$ be integers with
$1\le s\le t$. 
Let $\GG\subseteq \PR_t$ satisfying $h_1,\ldots,h_m\in \GG$ and 
$\GG\subseteq I$ (resp., $\GG\subseteq \sqrt[\oR] I$).
If $\dim \pi_s(\GG^\perp)=0$
 then $V_\oC(I)=\emptyset$ (resp., $V_\oR(I)=\emptyset$).
Assume now that $s\ge D$ and 
\begin{align}
\sublabon{equation}
\dim \pi_s(\GG^\perp) = \dim \pi_{s-1} (\GG^\perp), \label{ZRa}\\
\dim \pi_s(\GG^\perp) = \dim \pi_s((\GG^+)^\perp). \label{ZRb}
\end{align}
\sublaboff{equation}
Then there exists a set $\BB\subseteq \oT^n_{s-1}$ closed under taking 
 divisions
(and thus connected to 1) for which the following direct sum decomposition holds:
\begin{equation}\label{dims}
 \PR_{s}= \Span_\oR(\BB) \ \oplus \  (\PR_{s}\cap \Span_\oR(\GG)).
\end{equation}
Let $\BB\subseteq \oT^n_{s-1}$ be any set connected to 1 for which (\ref{dims}) holds, let $\varphi$ be the projection
from $\PR_s$ onto $\Span_\oR(\BB)$ along $\PR_{s}\cap \Span_\oR(\GG)$,
and let
$F_0:=\{\varphi(m)\mid m\in\partial \BB\}$, $J:=(F_0)$.
Then $\BB$ is a basis of $\PR/J$ and $F_0$ is a border basis of $J$.
Moreover:
\begin{itemize}
\item If  $\GG\subseteq I$ then $J=I$.

\item If $\GG\subseteq \sqrt[\oR] I$ then
\[V_\oR(I)\,=\,V_\oC(J)\cap\oR^n\,;\quad
J\cap\PR_s\,=\,\Span_\oR(\GG)\cap\PR_s\,;\quad
\pi_s(\DD[J])=\pi_s(\GG^\perp),\]
\noindent
and in addition, $J=\sqrt[\oR]I$ if $\dim\pi_s(\GG^\perp)=|V_\oR(I)|$.
\end{itemize}
\end{theorem}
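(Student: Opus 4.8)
The plan is to exploit the prolongation-projection dimension conditions (\ref{ZRa})--(\ref{ZRb}) to first produce the direct sum decomposition (\ref{dims}), then build the border basis from it, and finally transfer information between $\GG$, $J$, and the radical ideals degree by degree.

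\textbf{Step 1: Existence of $\BB$ and the decomposition (\ref{dims}).} Since $\GG\subseteq\PR_t$ and $s\le t$, the space $W_s:=\PR_s\cap\Span_\oR(\GG)$ is well-defined, and by finite-dimensionality $\dim\pi_s(\GG^\perp)=\dim(\PR_s)-\dim W_s$ (because $\pi_s(\GG^\perp)=(W_s)^\perp$ inside $(\PR_s)^*$: a functional on $\PR_s$ kills $W_s$ iff it extends to one killing $\GG$ on $\PR_t$, using that $\GG\subseteq\PR_t$ and projecting). Condition (\ref{ZRa}) says the codimension of $W_{s-1}$ in $\PR_{s-1}$ equals that of $W_s$ in $\PR_s$; I would use this, together with the obvious inclusion $W_{s-1}\subseteq W_s$ and $\PR_{s-1}\subseteq\PR_s$, to find a monomial complement $\BB_{s-1}$ to $W_{s-1}$ in $\PR_{s-1}$ which is simultaneously a complement to $W_s$ in $\PR_s$. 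A monomial basis can be chosen greedily (e.g.\ picking monomials of smallest degree first, or via a degree-compatible Gaussian elimination on $\Span_\oR(\GG)$), which forces $\BB$ to be closed under division: if $m\in\BB$ but some divisor $m'\notin\BB$, then $m'$ would be reducible modulo $\Span_\oR(\GG)$ and one argues $m$ is too, contradicting $m\in\BB$. Then set $\BB:=\BB_{s-1}\subseteq\oT^n_{s-1}$; this is the hard conceptual step and it is where condition (\ref{ZRa}) is essential.

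\textbf{Step 2: $\BB$ is a basis of $\PR/J$ and $F_0$ is a border basis.} With $\varphi$ the projection $\PR_s\to\Span_\oR(\BB)$ along $W_s$, the family $F_0=\{m-\varphi(m)\mid m\in\partial\BB\}$ is a rewriting family for $\BB$ in the sense of the paper, so by Lemma~\ref{lem1} the set $\BB$ spans $\PR/(F_0)=\PR/J$. To get that it is a \emph{basis}, i.e.\ the direct sum $\PR=\Span_\oR(\BB)\oplus J$, I would invoke Theorem~\ref{theoborder}: since $\BB$ is connected to 1, it suffices to show the formal multiplication matrices $\XX_i$ commute pairwise. This commutation is exactly what condition (\ref{ZRb}) buys: (\ref{ZRb}) says no new linear relations among monomials appear when passing from $\GG$ to $\GG^+=\GG\cup\bigcup_i x_i\GG$ at degree $s$, which translates into consistency of the two ways of computing $x_ix_jb$ via the rewriting rules, hence $\XX_i\XX_j=\XX_j\XX_i$. (This is the standard "involutivity/compatibility" argument of Zharkov--Reid; I would mirror the proof in \cite{LLR08} referenced in Section~\ref{secsketch}.) Then Theorem~\ref{theoborder} gives that $F_0$ is a border basis of $J$ and the $\XX_i$ represent multiplication by $x_i$ on $\PR/J$.

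\textbf{Step 3: The two cases.} If $\GG\subseteq I$, then $F_0\subseteq\Span_\oR(\GG)+I\subseteq I$ wait --- more carefully, each $m-\varphi(m)\in W_s\subseteq\Span_\oR(\GG)\subseteq I$, so $F_0\subseteq I$; since $\BB$ is a basis of $\PR/J$ with $1\in\BB$ (it is connected to 1) and $F_0\subseteq I$, Lemma~\ref{lem2} forces $J=(F_0)=I$. If instead $\GG\subseteq\sqrt[\oR]I$, the same reasoning gives $F_0\subseteq\sqrt[\oR]I$, hence $J\subseteq\sqrt[\oR]I$; combined with $h_1,\dots,h_m\in\GG\subseteq\PR_s$ (using $s\ge D$) and $F_0$ being a border basis, one gets $I\subseteq J\subseteq\sqrt[\oR]I$, whence $V_\oR(I)=V_\oR(J)=V_\oC(J)\cap\oR^n$ (the last equality because $J$ is zero-dimensional --- $\PR/J$ is finite-dimensional with basis $\BB$ --- and $\sqrt[\oR]I$ being real radical, $V_\oR(J)=V_\oC(J)$, since $\sqrt[\oR]J\supseteq\sqrt[\oR]I\supseteq J$ forces $J$ real radical up to... actually one just uses $I\subseteq J\subseteq\sqrt[\oR]I\Rightarrow\sqrt[\oR]I=\sqrt[\oR]J$ and $V_\oC(\sqrt[\oR]J)\subseteq\oR^n$). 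For $J\cap\PR_s=\Span_\oR(\GG)\cap\PR_s$: "$\supseteq$" holds since $\GG\subseteq I\subseteq J$; "$\subseteq$" holds because any $p\in J\cap\PR_s$ decomposes via (\ref{dims}) as $p=\varphi(p)+w$ with $w\in W_s$, and then $\varphi(p)=p-w\in J\cap\Span_\oR(\BB)=\{0\}$, so $p=w\in W_s=\Span_\oR(\GG)\cap\PR_s$. Dualizing this equality at degree $s$ gives $\pi_s(\DD[J])=(J\cap\PR_s)^\perp=(W_s)^\perp=\pi_s(\GG^\perp)$. Finally, if $\dim\pi_s(\GG^\perp)=|V_\oR(I)|$, then $|V_\oR(I)|=\dim\pi_s(\DD[J])\le\dim\DD[J]=\dim\PR/J=|\BB|$; but (\ref{ZRa})--(\ref{ZRb}) also give (by a standard stabilization argument, as in Theorem~\ref{theoA}) that $\dim\PR/J=\dim\pi_s(\GG^\perp)$, so $\dim\PR/J=|V_\oC(J)|$, which by the remark after the definition of $\DD[I]$ characterizes $J$ as radical; being sandwiched $I\subseteq J\subseteq\sqrt[\oR]I$ and radical (even real radical since $J\subseteq\sqrt[\oR]I$), $J=\sqrt[\oR]I$.

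\textbf{Main obstacle.} The crux is Steps~1--2: extracting the division-closed monomial basis $\BB$ compatible with \emph{both} degrees $s-1$ and $s$ from condition (\ref{ZRa}), and deducing commutativity of the $\XX_i$ from (\ref{ZRb}). These are the genuinely new technical points (the rest being bookkeeping with ideals and dual spaces); fortunately they parallel the complex case of \cite{LLR08, ReZh04}, so the sketch in Section~\ref{secsketch} can follow that template, merely replacing $V_\oC(I)$, $I$ by $V_\oR(I)$, $\sqrt[\oR]I$ throughout.
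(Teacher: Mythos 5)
Your overall route coincides with the paper's: identify $\pi_s(\GG^\perp)$ with the annihilator of $W_s=\PR_s\cap\Span_\oR(\GG)$, extract a division\-/closed monomial complement $\BB$, turn $\varphi$ into a rewriting family, invoke Mourrain's commutativity criterion to get $\PR=\Span_\oR(\BB)\oplus J$, and then do the degree-$s$ bookkeeping between $J$, $\Span_\oR(\GG)$ and their annihilators, ending with the counting argument $\dim\pi_s(\GG^\perp)=|\BB|=\dim\PR/J\ge|V_\oC(J)|\ge|V_\oR(I)|$ for the radicality claim. Steps 2 and 3 are, up to reorganization, the paper's proof.

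There is, however, a genuine gap in your Step 1. You assert that condition (\ref{ZRa}) together with a greedy, degree-compatible choice forces $\BB$ to be closed under division, and you single out (\ref{ZRa}) as the essential ingredient there. The greedy argument does not close with (\ref{ZRa}) alone: if $m=x_im_1\in\BB$ with a divisor $m_1\notin\BB$, then $m_1\equiv\sum_{a\prec m_1}\lambda_a a$ modulo $\Span_\oR(\GG)$, but multiplying by $x_i$ only yields $m\equiv\sum_a\lambda_a\, x_ia$ modulo $x_i\Span_\oR(\GG)\subseteq\Span_\oR(\GG^+)$, \emph{not} modulo $\Span_\oR(\GG)$. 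To conclude that $m$'s column is dependent on earlier ones you need $\PR_s\cap\Span_\oR(\GG^+)=\PR_s\cap\Span_\oR(\GG)$, and this is exactly what (\ref{ZRb}) supplies (the inclusion $\supseteq$ of annihilators is automatic, so equality of their dimensions forces equality of the spaces). The paper isolates precisely this point as hypothesis (\ref{assY}) of Lemma \ref{lemstable}, verified via (\ref{ZRb}); Remark \ref{remweak}~(ii) states explicitly that under (\ref{ZRa}) alone one cannot guarantee a division-closed (or even connected-to-1) $\BB$. A second, smaller slip: in the real case the inclusion $\Span_\oR(\GG)\cap\PR_s\subseteq J$ cannot be justified by ``$\GG\subseteq I\subseteq J$'' — one only has $\GG\subseteq\sqrt[\oR]I$, and $I\subseteq J$ is itself what is being established. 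The correct route is $\Span_\oR(\GG)\cap\PR_s=\Span_\oR(F)\subseteq(F)=(F_0)=J$ with $F=\{m-\varphi(m)\mid m\in\oT^n_s\}$, where the key identity $(F)=(F_0)$ again rests on (\ref{ZRb}).
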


This result is proved in \cite{LLR08} in the case when $\GG=\HH_t\subseteq I$, where 
\begin{equation}\label{setHt}
\HH_t:=\{\xx^\alpha h_j\mid |\alpha|+\deg(h_j)\le t,\ j=1,\ldots,m\}
\end{equation}
consists of all prolongations to degree $t$ of the generators $h_j$ of $I$. Note however that in \cite{LLR08} we did not prove the existence of $\BB$ closed under taking divisions; we include a proof in Section \ref{secsketch} below.

The proof for arbitrary $\GG\subseteq I$ is identical to the case $\GG=\HH_t$.
In the case $\GG\subseteq \sqrt [\oR] I$,  the  proof\footnote{Note that if we would apply the previous result to the ideal $J:=(I\cup \GG)$ and the set $\GG$, then we would reach the desired conclusion, but under the stronger assumption
$s\ge \max(D,D')$, where $D'$ is the maximum degree of a generating set for $\GG$.}
is essentially analogous (except for the last claim $J=\sqrt [\oR]I$ which is specific to the real case).
 We give a brief sketch of proof in the next section, since this enables us to point out the 
impact of the various assumptions and, moreover, some technical details are needed later in the presentation.

\subsection{Sketch of proof for Theorem \ref{theoZR}}\label{secsketch}

We begin with a lemma used to show the existence of $\BB$ closed by division in Theorem \ref{theoZR}.

\begin{lemma}\label{lemstable}
Let $Y$ be a matrix whose columns are indexed by $\oT^n_s$.
Assume
\begin{equation}\label{assY}
\forall \lambda\in \oR^{|\oT^n_{s-1}|}\ \ 
\sum_{a\in\oT^n_{s-1}}\lambda_a Y_a=0\Longrightarrow 
\sum_{a\in\oT^n_{s-1}}\lambda_a Y_{x_ia}=0,
\end{equation}
where $Y_a$ denotes the $a$-th column of $Y$.
Then there exists $\BB\subseteq \oT^n_s$ which is closed under taking
divisions and indexes a maximum linearly independent set
of columns of $Y$.
\end{lemma}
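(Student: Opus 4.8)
The plan is to build $\BB$ greedily, degree by degree, starting from the lowest-degree monomials and ensuring at each stage that the set constructed so far is closed under division. Concretely, I would process the monomials in $\oT^n_s$ in order of increasing degree (breaking ties arbitrarily, say by a fixed monomial order), and maintain a candidate set $\BB$. When a monomial $m$ is considered, I add it to $\BB$ precisely when (a) every proper divisor of $m$ is already in $\BB$, and (b) the column $Y_m$ is linearly independent from $\{Y_b : b\in\BB\}$ as currently built. Closure under division is then immediate from rule (a) together with the fact that we process monomials by nondecreasing degree, so that all divisors of $m$ (which have strictly smaller degree, except $m$ itself) have already been examined.

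The substance of the proof is to show that this greedy set $\BB$ actually indexes a \emph{maximum} linearly independent set of columns of $Y$, i.e. that $|\BB| = \rank Y$. This is where hypothesis (\ref{assY}) enters, and it is the main obstacle. I would argue that for any monomial $m\in\oT^n_s$ that was \emph{not} added to $\BB$, the column $Y_m$ lies in the span of $\{Y_b : b\in\BB\}$. There are two reasons a monomial can be rejected: either some proper divisor of $m$ was already rejected, or $m$ passed test (a) but failed the linear-independence test (b). In the latter case $Y_m$ is by construction in $\Span\{Y_b : b\in\BB\}$, so there is nothing to prove. The real work is the former case: I would use induction on the degree of $m$. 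If $m$ fails because some proper divisor $m'$ was rejected, write $m = x_i m_1$ where $m_1$ is a proper divisor of $m$ of degree $\deg(m)-1$ and $m_1$ has $m'$ (or some rejected divisor) among its own divisors, hence $m_1$ was itself rejected. By the induction hypothesis $Y_{m_1} = \sum_{a\in\oT^n_{s-1}} \lambda_a Y_a$ where we may take $\lambda$ supported on monomials of degree $\le \deg(m_1) \le s-1$ — in fact supported on $\BB\cap\oT^n_{s-1}$. Then I subtract: $\sum_a \lambda_a Y_a - Y_{m_1} = 0$ is a linear dependence among columns indexed by $\oT^n_{s-1}$ (moving $m_1$ to the left and noting $m_1\in\oT^n_{s-1}$). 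Applying hypothesis (\ref{assY}) with the variable $x_i$, the same coefficient vector kills the shifted columns: $\sum_a \lambda_a Y_{x_i a} - Y_{x_i m_1} = 0$, i.e. $Y_m = Y_{x_i m_1} = \sum_a \lambda_a Y_{x_i a}$. Now each $x_i a$ with $a\in\BB\cap\oT^n_{s-1}$ is a monomial in $\oT^n_s$; if $x_i a \in \BB$ we are done for that term, and otherwise $x_i a$ was rejected and has strictly smaller-or-equal degree issues — so I would apply the induction hypothesis again to rewrite $Y_{x_i a}$ in terms of $\{Y_b : b\in\BB\}$. (One must be slightly careful that this secondary induction terminates; since $\deg(x_i a) \le \deg(m)$ and we can set up the induction on degree with a finite nested descent, or equivalently prove the statement "$Y_m \in \Span\{Y_b : b\in\BB, \deg b \le \deg m\}$" for all $m$ by strong induction on $\deg m$, handling all monomials of a given degree simultaneously — the shifted monomials $x_i a$ either lie in $\BB$ or, being rejected and of degree $\le \deg m$, fall under the inductive hypothesis at that degree, and the finitely many such rewrites compose.)

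Once every column $Y_m$ with $m\notin\BB$ is shown to lie in $\Span\{Y_b:b\in\BB\}$, it follows that the columns indexed by $\BB$ span the column space of $Y$; since they were selected to be linearly independent, $\BB$ indexes a basis of the column space, hence a maximum linearly independent set of columns. Combined with the closure-under-division property established at the outset, this gives the lemma. I would also remark, for use in Theorem \ref{theoZR}, that a set closed under taking divisions automatically contains $1$ whenever it is nonempty and is connected to $1$, matching the phrasing "(and thus connected to 1)" there.

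\medskip
\noindent\emph{Remark on the application.} In Theorem \ref{theoZR} the lemma is invoked with (the transpose of) a matrix whose columns are coordinates of a basis of $\pi_s(\GG^\perp)$ in the dual basis $\DD_n$ restricted to $\oT^n_s$; hypothesis (\ref{ZRa}) is exactly what guarantees that a linear dependence among the degree-$\le s-1$ columns persists, and (\ref{ZRb}) feeds the shift-invariance condition (\ref{assY}). So the hard technical point of the whole argument is really verifying that the dimension conditions (\ref{ZRa})–(\ref{ZRb}) translate into the column condition (\ref{assY}); the combinatorial greedy construction above is then the clean way to extract a division-closed basis.
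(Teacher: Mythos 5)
Your overall strategy is the mirror image of the paper's: you enforce closure under division by construction and then must prove maximality, whereas the paper runs the plain greedy algorithm with respect to a total degree monomial ordering $\preceq$ (so maximality is automatic) and then uses (\ref{assY}) to prove that the resulting $\BB$ is closed under division. Both routes are viable, but your proof of maximality has a genuine gap at the step you flag parenthetically: the rewriting process among rejected monomials of the top degree $d=\deg m$ is not well-founded as you have set it up. After applying (\ref{assY}) you obtain $Y_m=\sum_b\lambda_b Y_{x_ib}$, and the monomials $x_ib$ of degree exactly $d$ that were rejected by your rule (a) must themselves be expanded; strong induction on degree says nothing about them, and ``the finitely many such rewrites compose'' fails because the rewrites can cycle. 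Concretely, with $n=2$ and $\BB\cap\oT^2_2=\{1,x_1,x_2,x_1x_2\}$, expanding $Y_{x_1^3}$ via $x_1^3=x_1\cdot x_1^2$ can produce a term $Y_{x_1^2x_2}$, expanding $Y_{x_1^2x_2}$ via $x_1^2x_2=x_2\cdot x_1^2$ can produce $Y_{x_1x_2^2}$, and expanding $Y_{x_1x_2^2}$ via $x_1x_2^2=x_1\cdot x_2^2$ can produce $Y_{x_1^2x_2}$ again. In matrix form you only obtain a relation $v=Cv$ for the vector $v$ of classes of the rejected degree-$d$ columns modulo $\Span\{Y_b\mid b\in\BB\}$, which does not force $v=0$.

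The repair is exactly the device the paper uses: induct on the total degree monomial ordering $\preceq$ itself (a well-ordering of $\oT^n_s$ refining degree, which you already introduced to break ties), with the strengthened hypothesis $Y_m\in\Span\{Y_b\mid b\in\BB,\ b\prec m\}$ for rejected $m$. Multiplicativity of $\preceq$ gives $b\prec m_1\Rightarrow x_ib\prec x_im_1=m$, so every shifted column $Y_{x_ib}$ in the expansion is indexed by a monomial strictly $\preceq$-smaller than $m$ and the descent terminates. With that change your argument goes through (your reduction of case (a) to a codegree-one rejected divisor $m_1$, and the degree bound $\deg m_1\le s-1$ needed to invoke (\ref{assY}), are both fine). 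You may find it simpler, though, to adopt the paper's version outright: run the greedy algorithm on $\preceq$ with only the independence test, and show by contradiction that if $m=x_im_1\in\BB$ with $m_1\notin\BB$, then (\ref{assY}) applied to the dependency $Y_{m_1}=\sum_{a\in\BB,\,a\prec m_1}\lambda_aY_a$ exhibits $Y_m$ as a combination of columns indexed by monomials preceding $m$, contradicting $m\in\BB$; this avoids having to prove maximality at all.
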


\begin{proof}
Order the monomials in $\oT^n_s$ according to a total degree monomial ordering $\preceq$. Let $\BB\subseteq \oT^n_s$ index a maximum linearly independent set of columns of $Y$, which is constructed using the greedy algorithm applied to the ordering $\preceq$ of the columns. Then, setting
$\BB_m:=\{m'\in\BB\mid m'\prec m\}$, $m\in\BB$ 
 precisely when  $\BB_m\cup\{m\}$ indexes a linearly independent set of columns of $Y$.
We claim that $\BB$ is closed under taking divisions. For this assume $m\in\BB$ and
$m=x_im_1$ with $m_1\not\in\BB$.
As $m_1\not\in \BB$, we deduce that 
$$Y_{m_1}=\sum_{a\in \BB_{m_1}}\lambda_a Y_a \ \ \text{ for some scalars } 
\lambda_a.$$
For $a\in \BB_{m_1}$, $a\prec m_1$ implies $x_ia\prec x_im_1=m$, i.e.,
$x_ia\in\BB_m$. Applying (\ref{assY}) we deduce that
$$Y_m=\sum_{a\in \BB_{m_1}}\lambda_a Y_{x_ia},$$
which gives a linear dependency of $Y_m$ with the columns indexed by $\BB_m$, contradicting 
$m\in\BB$.
\end{proof}

We now sketch the proof of Theorem \ref{theoZR}.
Set $N:=\dim \pi_{s-1}(\GG^\perp)$. If $N=0$ then $V_\oK(I)=\emptyset$ (for otherwise the evaluation at $v\in V_\oK(I)$ would give a nonzero element of
$\pi_{s-1}(\GG^\perp)$).
Let $\{L_1,\ldots,L_N\}\subseteq \GG^\perp$ for which 
$\{\pi_{s-1}(L_1),\ldots,\pi_{s-1}(L_N)\}$ is a basis of $\pi_{s-1}(\GG^\perp)$.
Let $Y$ be the $N\times |\oT^n_{s-1}|$ matrix with $(j,m)$-th entry $L_j(m)$
for $j\le N$ and $m\in\oT^n_{s-1}$.
We verify that $Y$ satisfies the condition (\ref{assY}) of Lemma \ref{lemstable} (replacing $s$ by $s-1$). For this note that
%
$\sum_{a\in\oT^n_{s-2}}\lambda_aY_a=0$ if and only if 
$p :=\sum_{a\in\oT^n_{s-2}}\lambda_a a \in (\pi_{s-2}(\GG^\perp))^\perp=
\Span_\oR(\GG)\cap\PR_{s-2}$ and thus
$x_ip\in  \Span_\oR(\GG^+)\cap\PR_{s-1}$; in view of (\ref{ZRb}), this implies 
$x_ip\in \Span_\oR(\GG)\cap \PR_{s-1}$ and thus  $\sum_{a\in\oT^n_{s-2}}\lambda_aY_{x_ia}=0$. 
Thus we can apply Lemma \ref{lemstable}: There exists a set $\BB$ indexing a maximum linearly independent set of columns of $Y$ which is closed by division. This amounts to having the direct sum decomposition:
\begin{equation}\label{dims-1}
\PR_{s-1}= \Span_\oR(\BB)\oplus (\Span_\oR(\GG)\cap\PR_{s-1})).
\end{equation}
As $N=\dim\pi_s(\GG^\perp)$, the set $\{\pi_s(L_1),\ldots,\pi_s(L_N)\}$ is a basis of
$\pi_s(\GG^\perp)$, and thus (\ref{dims}) holds.
Set 
$F:= \{m-\varphi(m)\mid m\in\oT^n_s\}.$
Obviously, $F_0\subseteq F\subseteq \Span_\oR(\GG_t)\cap \PR_s.$
Moreover, one can verify (cf. \cite{LLR08}) that
\begin{equation}\label{eqa}
\Span_\oR(F)=\Span_\oR(\GG)\cap \PR_s,
\end{equation}
\begin{equation}\label{eqb}
(F_0)=(F), \ \   I\subseteq (F) \ \text{ if } \ s\ge D,
\end{equation}
\begin{equation}\label{eqc}
\varphi(x_i\varphi(x_jm)) =\varphi(x_j\varphi(x_im)) \
\text{ for } m\in \BB \text{ and } i=1,\ldots,n.
\end{equation}
Note that  (\ref{ZRb}) is used to show (\ref{eqb})-(\ref{eqc}).

The ideal $J:=(F_0)$ satisfies  $I\subseteq J$ (by (\ref{eqb})) and
$J\subseteq I$ or $J\subseteq \sqrt[\oR] I$ depending on the assumption on $\GG$.  
As $\BB$ is connected to 1 and we have  the commutativity property (\ref{eqc}),
we can apply \cite[Theorem 3.1]{Mou99} and deduce that
$\BB$ is a basis of $\PR/J$.
The inclusion: $\Span_\oR(\GG)\cap \PR_s\subseteq J\cap\PR_s$ follows from 
(\ref{eqa})-(\ref{eqb}),
while the reverse inclusion follows from the fact that
$\varphi(p)=0$ for all $p\in J\cap\PR_s$ since $\BB$ is a basis of $\PR/J$.
Thus $\Span_\oR(\GG)\cap \PR_s= J\cap\PR_s$, implying
$\pi_s(\GG^\perp)= (J\cap\PR_s)^\perp$.
The inclusion $\pi_s(J^\perp)\subseteq  (J\cap\PR_s)^\perp$
is obvious, and the reverse inclusion follows from
$(\pi_s(J^\perp))^\perp\subseteq (J^\perp)^\perp\cap\PR_s= J\cap\PR_s$, 
since $J$ is zero-dimensional. Hence 
$\pi_s(\GG^\perp) = \pi_s(J^\perp)=\pi_s(\DD[J])$.
Finally note that 
$$\dim\pi_s(\GG^\perp) =|\BB|=\dim\PR/J \ge |V_\oC(J)| \ge |V_\oR(I)|.$$
Hence, if $\dim\pi_s(\GG^\perp)=|V_\oR(I)|$, then equality holds throughout, which implies that $J$ is real radical and thus $J=\sqrt [\oR]I$.
This concludes the proof of Theorem \ref{theoZR}.


\begin{remark} \label{remweak} 
We indicate here what happens if we weaken some assumptions in Theorem \ref{theoZR}.\\
 (i) The condition $s\ge D$ is used only in (\ref{eqb}) to show $I\subseteq (F)$. 
Hence if we omit the condition $s\ge D$  in Theorem \ref{theoZR},
then we get the same conclusion except that we cannot claim  $I\subseteq J$.\\
 (ii)
Consider now the case where we assume only that (\ref{ZRa}) holds (and not (\ref{ZRb})).
As we use (\ref{ZRb}) to show the existence of $\BB$ connected to 1 and to prove 
(\ref{eqb})-(\ref{eqc}), 
we cannot prove the commutativity property (\ref{eqc}), neither the equality $(F)=(F_0)$.
Nevertheless, what we can do is test whether $\BB$ is connected to 1 and whether 
(\ref{eqc}) holds. If this is the case, then
we can conclude that $\BB$ is a basis of $\PR/J$ where $J=(F_0)\subseteq \sqrt[\oR]I$ and extract the variety $V_\oC(J)$ which satisfies 
$V_\oR(I)\subseteq V_\oC(J)\cap\oR^n$ and $|V_\oC(J)|\le \dim \PR/J=|\BB|$. 
Then it suffices to sort $V_\oR(I)$ out of $V_\oC(J)$.
 The additional information that condition (\ref{ZRb}) gives us is the guarantee that the commutativity property (\ref{eqc}) holds
and the equality $J=(F)$, thus implying $J\supseteq I$ and $V_\oR(I)=V_\oC(J)\cap\oR^n$ if $s\ge D$.

\end{remark}

\subsection{A concrete choice for the polynomial system $\GG$ in Theorem \ref{theoZR}}\label{secconcrete}

For the task of computing $V_\oC(I)$, 
one can choose  as indicated in \cite{LLR08}
the set $\GG=\HH_t$ from  (\ref{setHt}) and thus consider the linear subspace
$\KK_t:=\HH_t^\perp$ of $(\PR_t)^*$.
For the task of computing $V_\oR(I)$, as inspired by \cite{LLR07},
we  augment $\HH_t$ with a set $\WW_t$ of polynomials in $\sqrt
[\oR] I$ obtained from the kernel of a 
suitable positive element in $\HH_t^\perp$. 
For this, consider the convex cone
$$\KK_{t,\succeq }:=\{L\in \HH_t^\perp\mid M_{\ttt}(L)\succeq 0\},$$
consisting of the elements of $\KK_t$ that are positive, i.e. satisfy $L(p^2)\ge 0$ whenever $\deg(p^2)\le t$.
Generic elements of $\KK_{t,\succeq}$ (defined in Lemma \ref{lemgeneric} below) play a central role; geometrically
these are the elements lying in the relative interior of the cone
$\KK_{t,\succeq}$.

\begin{lemma}\label{lemgeneric}
The following assertions are equivalent for $L^*\in \KK_{t,\succeq}$.
\begin{itemize}
\item[(i)] $\rank M_{\ttt}(L^*)=\max_{L\in \KK_{t,\succeq}} \rank M_{\ttt}(L)$.
\item[(ii)] 
$\rank M_s(L^*)=\max_{L\in \KK_{t,\succeq}} \rank M_s(L)$ for all
$1\le s\le \ttt$.
\item[(iii)] 
$\Ker M_s(L^*)\subseteq \Ker M_s(L)$ for all $L\in \KK_{t,\succeq}$ and
$1\le s\le \ttt$.
\end{itemize}
Then $L^*$ is said to be  {\em generic}.
\end{lemma}

\begin{proof}
Direct verification using (\ref{psdker})-(\ref{psdkerb}).
\end{proof}
Hence any two generic elements $L_1,L_2\in\KK_{t,\succeq}$ have the same kernel,  denoted by 
$\NNN_t$ ($=\Ker M_\ttt(L_1)=\Ker M_\ttt(L_2)$),  which satisfies
\begin{equation}\label{incNt}
\NN_t\subseteq \NN_{t'} \ \text{ if } t\le t'
\end{equation}
(easy verification), as well as 
\begin{equation}
\label{lemWt}
\NNN_t\subseteq \sqrt[\oR] I.
\end{equation}
(cf. \cite[Lemma 3.1]{LLR07}).
Define the set 
\begin{align}\label{eqn::Wt}
\WW_t:=\{\xx^\alpha g \mid \alpha\in\oN^n_\ttt, g\in \NNN_t\},
\end{align}
whose definition is motivated by the fact that, for $L\in (\PR_t)^*$, 
\begin{equation}\label{NWt}
\NNN_t\subseteq \Ker M_\ttt(L)\Longleftrightarrow 
L\in \WW_t^\perp.
\end{equation}
Therefore,  $\WW_t\subseteq \sqrt[\oR]I$.
For the task of computing $V_\oR(I)$,  our choice for the set $\GG$ in Theorem \ref{theoZR} is
\begin{equation}\label{setGt}
\GG_t:= \HH_t\cup \WW_t.
\end{equation}
Note also that
\begin{align}\label{eqn::HWt}
\KK_{t,\succeq }\subseteq \HH_t^\perp \cap \WW_t^\perp = (\HH_t\cup \WW_t)^\perp.
\end{align}
In fact, as we now show,
both sets in (\ref{eqn::HWt}) have the same dimension, i.e. $(\HH_t\cup \WW_t)^\perp$ is the smallest linear space containing the cone $\KK_{t,\succeq }$.
\begin{lemma} \label{lemdim}
$\dim \KK_{t,\succeq}= \dim (\HH_t\cup \WW_t)^\perp$.
\end{lemma}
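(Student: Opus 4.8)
\textbf{Proof plan for Lemma \ref{lemdim}.}
The inclusion in \eqref{eqn::HWt} already gives $\dim \KK_{t,\succeq}\le \dim(\HH_t\cup\WW_t)^\perp$, so the real content is the reverse inequality. The plan is to show that the linear span of the cone $\KK_{t,\succeq}$ is all of $(\HH_t\cup\WW_t)^\perp$, equivalently that every $L\in(\HH_t\cup\WW_t)^\perp$ is a linear combination of positive elements of $\KK_t$. First I would fix a generic element $L^*\in\KK_{t,\succeq}$ (in the sense of Lemma \ref{lemgeneric}); by definition $\Ker M_\ttt(L^*)=\NNN_t$, hence $L^*\in\WW_t^\perp$ by \eqref{NWt} and also $L^*\in\HH_t^\perp$, so $L^*$ itself belongs to $(\HH_t\cup\WW_t)^\perp$. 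The strategy is then the standard perturbation argument: given an arbitrary $L\in(\HH_t\cup\WW_t)^\perp$, consider $L_\varepsilon:=L^*+\varepsilon L$ for small $\varepsilon>0$ and show $L_\varepsilon\in\KK_{t,\succeq}$; since $L=\tfrac1\varepsilon(L_\varepsilon-L^*)$ is then a difference of two elements of the cone, this proves $L\in\Span_\oR(\KK_{t,\succeq})$.

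The key step is verifying $L_\varepsilon\in\KK_{t,\succeq}$ for $\varepsilon$ small enough. Linearity makes the membership $L_\varepsilon\in\HH_t^\perp$ immediate (both $L^*$ and $L$ vanish on $\HH_t$), so what remains is $M_\ttt(L_\varepsilon)=M_\ttt(L^*)+\varepsilon M_\ttt(L)\succeq 0$. Here I would use that $\Ker M_\ttt(L^*)=\NNN_t\subseteq \Ker M_\ttt(L)$: the last inclusion holds because $L\in\WW_t^\perp$, which by \eqref{NWt} is exactly the statement $\NNN_t\subseteq \Ker M_\ttt(L)$. Decomposing $\PR_\ttt$ as $\NNN_t\oplus W$ for a complement $W$, both matrices are block-diagonal with a zero block on $\NNN_t$; on the complementary block $M_\ttt(L^*)$ is positive \emph{definite} (its kernel is exactly $\NNN_t$), so adding $\varepsilon$ times the bounded symmetric matrix $M_\ttt(L)|_W$ keeps it positive semidefinite for all $\varepsilon\in[0,\varepsilon_0]$ with $\varepsilon_0>0$ small. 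Hence $M_\ttt(L_{\varepsilon_0})\succeq 0$, i.e. $L_{\varepsilon_0}\in\KK_{t,\succeq}$, and $L=\tfrac1{\varepsilon_0}(L_{\varepsilon_0}-L^*)\in\Span_\oR(\KK_{t,\succeq})$.

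This shows $(\HH_t\cup\WW_t)^\perp\subseteq \Span_\oR(\KK_{t,\succeq})$, and combined with \eqref{eqn::HWt} we get $\Span_\oR(\KK_{t,\succeq})=(\HH_t\cup\WW_t)^\perp$, whence the two have equal dimension. I do not anticipate a genuine obstacle; the only point requiring a little care is the block-diagonalization argument, namely making precise that $M_\ttt(L^*)$ restricted to a complement of its kernel is positive definite so that a small symmetric perturbation stays positive semidefinite — this is the elementary fact underlying \eqref{psdker}-\eqref{psdkerb} and is exactly the kind of perturbation step already invoked implicitly in Lemma \ref{lemgeneric}.
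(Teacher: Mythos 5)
Your argument is correct and is essentially the paper's proof: both fix a generic $L^*$ in the relative interior of $\KK_{t,\succeq}$ and identify the space of admissible perturbations at $L^*$ with $(\HH_t\cup\WW_t)^\perp$ via \eqref{NWt}. The only difference is that you prove the key perturbation fact (that $M_\ttt(L^*)+\varepsilon M_\ttt(L)\succeq 0$ for small $\varepsilon$ whenever $\Ker M_\ttt(L^*)\subseteq\Ker M_\ttt(L)$) directly by the kernel/complement block decomposition, whereas the paper delegates this to a citation; your version of that step is sound.
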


\begin{proof}
 Pick $L^*$ lying in the relative interior of $\KK_{t,\succeq}$, i.e. $L^*$ is generic, 
 and define 
$$\PP_t:=\{L\in (\PR_t)^*\mid L^*\pm \epsilon L\in \KK_{t,\succeq} \
\text{ for some }\epsilon>0\},$$ 
the linear space consisting of all possible {\em perturbations} at $L^*$. Then, $\dim \KK_{t,\succeq}=\dim \PP_t$.
One can verify that there exists an $\epsilon>0$ such that $ L^*\pm \epsilon L\in \KK_{t,\succeq}$ if and only if 
$L\in \HH_t^\perp$ and $\Ker M_\ttt(L^*)\subseteq \Ker M_\ttt(L)$ (cf. e.g. \cite[Thm. 31.5.3]{DL97}). 
As the latter condition is equivalent to $L\in \WW_t^\perp$ by (\ref{NWt}), we find
$\PP_t= (\HH_t\cup\WW_t)^\perp$, which concludes the proof.
\end{proof}



We conclude with a characterization of $\sqrt[\oR]I$ and of its dual space $\DD[\sqrt[\oR]I]$, using the sets $\GG_t$ from (\ref{setGt}).

\begin{proposition}
\label{prop::charact} 
With $\GG_t=\HH_t\cup\WW_t$, $\sqrt[\oR]I=\bigcup_t \Span_\oR(\GG_t)$ and
$\DD[\sqrt[\oR]I]=\bigcap _t \GG_t^\perp$.
\end{proposition}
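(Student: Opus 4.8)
The plan is to prove the two claimed equalities separately, using the machinery already assembled. For the first equality $\sqrt[\oR]I=\bigcup_t\Span_\oR(\GG_t)$: the inclusion $\bigcup_t\Span_\oR(\GG_t)\subseteq\sqrt[\oR]I$ is immediate, since each $\HH_t\subseteq I\subseteq\sqrt[\oR]I$ and each $\WW_t\subseteq\sqrt[\oR]I$ by (\ref{lemWt})--(\ref{eqn::Wt}), and $\sqrt[\oR]I$ is an ideal hence a vector space containing $\Span_\oR(\GG_t)$. For the reverse inclusion I would invoke Theorem \ref{theoZR} with $\GG=\GG_t$. Since $V_\oR(I)$ is finite (this is the standing assumption of the paper), the dimension conditions (\ref{ZRa})--(\ref{ZRb}) are eventually satisfied for some $t$ large enough and $s$ with $D\le s\le t$ — this is precisely the content behind the termination of the prolongation-projection algorithm, and it follows from Corollary \ref{corlink} / Proposition \ref{prop2} (the rank condition of \cite{LLR07} holds for $t$ large, and implies the strong version of the dimension condition, hence (\ref{ZRa})--(\ref{ZRb})). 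Moreover, once $t$ is large enough, $\dim\pi_s(\GG_t^\perp)=|V_\oR(I)|$, so Theorem \ref{theoZR} yields $J=(F_0)=\sqrt[\oR]I$ together with $J\cap\PR_s=\Span_\oR(\GG_t)\cap\PR_s$. Then, using (\ref{incNt}) and the fact that for $t'\ge t$ the same genericity analysis applies, one gets $\GG_t\subseteq\Span_\oR(\GG_{t'})$ for $t'\ge t$, and since $J=\sqrt[\oR]I$ is generated by $F_0\subseteq\Span_\oR(\GG_t)$ whose prolongations all land in $\Span_\oR(\GG_{t'})$ for $t'$ large, every element of $\sqrt[\oR]I$ lies in some $\Span_\oR(\GG_{t'})$. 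Concretely: $p\in\sqrt[\oR]I=(F_0)$ means $p=\sum_m u_mf_m$ with $f_m\in F_0\subseteq\Span_\oR(\GG_t)\subseteq\Span_\oR(\WW_t\cup\HH_t)$; choosing $t'\ge t+\deg p$ and using that $\xx^\alpha g\in\GG_{t'}$ whenever $g\in\NNN_t\subseteq\NNN_{t'}$ and $|\alpha|$ is small enough, one checks $p\in\Span_\oR(\GG_{t'})$.

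For the second equality $\DD[\sqrt[\oR]I]=\bigcap_t\GG_t^\perp$: one inclusion is formal. Since $\GG_t\subseteq\sqrt[\oR]I$ for every $t$, any $L$ vanishing on $\sqrt[\oR]I$ vanishes on every $\GG_t$, so $\DD[\sqrt[\oR]I]\subseteq\bigcap_t\GG_t^\perp$. For the reverse inclusion, take $L\in\bigcap_t\GG_t^\perp$ and an arbitrary $p\in\sqrt[\oR]I$; by the first part of the proposition, $p\in\Span_\oR(\GG_{t'})$ for some $t'$, hence $L(p)=0$ because $L\in\GG_{t'}^\perp$. Thus $L$ vanishes on all of $\sqrt[\oR]I$, i.e. $L\in\DD[\sqrt[\oR]I]$. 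Note this second equality is essentially a corollary of the first one together with the elementary fact that $(\bigcup_t S_t)^\perp=\bigcap_t S_t^\perp$ and $S^\perp=(\Span_\oR S)^\perp$.

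The main obstacle is the reverse inclusion in the first equality, specifically making rigorous the claim that the conclusion $J=\sqrt[\oR]I$ of Theorem \ref{theoZR} is actually reached for some finite $t$. This requires citing the termination guarantee: the finiteness of $V_\oR(I)$ forces the rank condition $\rank M_s(L)=\rank M_{s-1}(L)$ of \cite{LLR07} (equivalently Theorem \ref{theoA}) to hold at some finite order for a generic $L\in\KK_{t,\succeq}$, and then Proposition \ref{prop2}/Corollary \ref{corlink} transfers this to the dimension conditions (\ref{ZRa})--(\ref{ZRb}) with $\dim\pi_s(\GG_t^\perp)=|V_\oR(I)|$. A secondary, more bookkeeping obstacle is verifying $\GG_t\subseteq\Span_\oR(\GG_{t'})$ for $t'\ge t$: this needs $\HH_t\subseteq\Span_\oR(\HH_{t'})$ (clear from (\ref{setHt})) and $\WW_t\subseteq\Span_\oR(\WW_{t'})$, the latter following from (\ref{incNt}), namely $\NNN_t\subseteq\NNN_{t'}$, together with the definition (\ref{eqn::Wt}) of $\WW_t$ — one must be slightly careful that the prolongation degrees match up, i.e. that $\xx^\alpha g$ with $|\alpha|\le\lfloor t/2\rfloor$ and $g\in\NNN_t$ is among the $\xx^\beta g'$ with $|\beta|\le\lfloor t'/2\rfloor$ and $g'\in\NNN_{t'}$, which holds since $\lfloor t/2\rfloor\le\lfloor t'/2\rfloor$ and $\NNN_t\subseteq\NNN_{t'}$. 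These are routine once the termination fact is in hand, so I would state them briefly and spend the bulk of the argument on the application of Theorem \ref{theoZR}.
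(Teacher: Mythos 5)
Your proposal is correct and follows essentially the same route as the paper: the easy inclusion via (\ref{lemWt}), then the termination result of \cite{LLR07} to obtain a finite generating set of $\sqrt[\oR]I$ inside $\Span_\oR(\GG_t)$ for some $t$ (the paper cites $\sqrt[\oR]I=(\Ker M_s(L^*))$ directly from Theorem \ref{thmrank}, while you reach the equivalent fact $\sqrt[\oR]I=(F_0)$ with $F_0\subseteq\Span_\oR(\GG_t)$ via Theorem \ref{theoZR} and Proposition \ref{prop2}), followed by the absorption of prolongations into $\Span_\oR(\GG_{t'})$ using (\ref{incNt}), and the purely formal deduction of the dual statement. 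Your detour through Theorem \ref{theoZR} is slightly longer but rests on exactly the same underlying facts, and you are in fact more explicit than the paper about the degree bookkeeping needed to place $u_m f_m$ inside $\Span_\oR(\GG_{t'})$.
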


\begin{proof}
The inclusion $\bigcup_t \Span_\oR(\GG_t)\subseteq \sqrt[\oR]I$ follows from (\ref{lemWt}). Next,
for some order $(t,s)$ we have $\sqrt[\oR]I=(\Ker M_s(L^*))$. 
The proof, which relies on the existence of a finite basis for the 
ideal $\sqrt[\oR]I$ can be found in \cite{LLR07}.
This fact, combined with $\Ker M_s(L^*) \subseteq \NNN_t \subseteq \Span_\oR(\GG_t)$,
implies the reverse inclusion $ \sqrt[\oR]I\subseteq \bigcup_t \Span_\oR(\GG_t)$.
Now the equality 
$\sqrt[\oR]I =\bigcup_t\Span_\oR\GG_t$ implies in turn 
 $\DD[\sqrt[\oR]I]=\bigcap _t \GG_t^\perp$. 
\hfill\end{proof}

When $|V_\oR(I)|<\infty$,
the dual of the real radical ideal coincides in fact  with  the vector space spanned by the
 evaluations at all $v\in V_\oR(I)$. Proposition~\ref{prop::charact} shows how to obtain it directly from the quadratic forms 
$Q_L$ (or its matrix representation $M_{\lfloor t/2 \rfloor}(L)$) for a generic $L\in \KK_{t,\succeq}$ without
a priori knowledge of $V_\oR(I)$. 

\section{Links with the moment-matrix method}
\label{sec::links}
In this section we explore the links with the moment-matrix method of \cite{LLR07} for finding $V_\oR(I)$ as well as the real radical ideal $\sqrt[\oR]I$. We recall the main result of \cite{LLR07}, underlying this method.

\begin{theorem}\cite{LLR07}\label{thmrank}
Let $L^*$ be a generic element of $\KK_{t,\succeq}$.
Assume that 
\begin{align}\label{flat1}
\rank M_s(L^*)=\rank M_{s-1}(L^*)
\end{align}
for some $D\le s\le \ttt.$
Then $( \Ker M_s(L^*)) =\sqrt[\oR]I$ and any set
$\BB\subseteq \oT^n_{s-1}$ indexing a maximum linearly independent set of columns of $M_{s-1}(L^*)$ is a basis of $\PR/\sqrt[\oR]I$.
\end{theorem}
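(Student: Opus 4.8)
\emph{Proof idea.}
The strategy is to show that $J^*:=(\Ker M_s(L^*))$ is squeezed between $I$ and $\sqrt[\oR]I$ and is moreover a zero-dimensional \emph{real radical} ideal; since $\sqrt[\oR]I$ is the smallest real radical ideal containing $I$, this forces $J^*=\sqrt[\oR]I$, and the basis statement then follows from a rank count. First I would establish the two inclusions. For $I\subseteq J^*$: since $L^*\in\KK_{t,\succeq}\subseteq\HH_t^\perp$, and for every $j$ and every $\beta\in\oN^n_s$ one has $\deg(\xx^\beta h_j)=|\beta|+\deg(h_j)\le s+D\le 2s\le t$ (using $D\le s\le\ttt$), it follows that $\xx^\beta h_j\in\HH_t$ (see (\ref{setHt})) and hence $L^*(\xx^\beta h_j)=0$; this says exactly that $M_s(L^*)\vect(h_j)=0$, i.e. $h_j\in\Ker M_s(L^*)$, so $I\subseteq J^*$. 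For $J^*\subseteq\sqrt[\oR]I$: since $L^*$ is generic, $\Ker M_\ttt(L^*)=\NNN_t$; as $M_\ttt(L^*)\succeq 0$ and $s\le\ttt$, (\ref{psdker}) gives $\Ker M_s(L^*)=\Ker M_\ttt(L^*)\cap\PR_s\subseteq\NNN_t$, and since $\NNN_t\subseteq\sqrt[\oR]I$ (see (\ref{lemWt})) and $\sqrt[\oR]I$ is an ideal, we get $J^*\subseteq\sqrt[\oR]I$.

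Next I would pass to a linear form on all of $\PR$. The rank hypothesis (\ref{flat1}) lets us apply Theorem \ref{theoCF} to $\pi_{2s}(L^*)\in(\PR_{2s})^*$ (legitimate since $2s\le t$): there is $\tilde L\in(\PR)^*$ with $\pi_{2s}(\tilde L)=\pi_{2s}(L^*)$, $\rank M(\tilde L)=\rank M_s(L^*)$ and $\Ker M(\tilde L)=(\Ker M_s(L^*))=J^*$; moreover $M(\tilde L)\succeq 0$, because $M_s(L^*)$ is a principal submatrix of $M_\ttt(L^*)\succeq 0$ and the (unique) flat extension of a positive semidefinite moment matrix is again positive semidefinite (\cite{CF96}). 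Since $M_{s-1}(\tilde L)=M_{s-1}(L^*)$ we have $\rank M(\tilde L)=\rank M_{s-1}(\tilde L)$, so Theorem \ref{theoA} applies to $\tilde L$: the ideal $J^*$ is zero-dimensional and real radical with $|V_\oC(J^*)|=\rank M(\tilde L)=\rank M_s(L^*)$. Combining with $I\subseteq J^*\subseteq\sqrt[\oR]I$ and taking real radicals, $\sqrt[\oR]I\subseteq\sqrt[\oR]{J^*}=J^*$, hence $J^*=\sqrt[\oR]I$. In particular $\sqrt[\oR]I$ is zero-dimensional and radical, so $\dim\PR/\sqrt[\oR]I=|V_\oC(\sqrt[\oR]I)|=\rank M_s(L^*)=\rank M_{s-1}(L^*)$.

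For the basis statement, let $\BB\subseteq\oT^n_{s-1}$ index a maximum linearly independent set of columns of $M_{s-1}(L^*)$, so that $|\BB|=\rank M_{s-1}(L^*)=\dim\PR/\sqrt[\oR]I$; it then suffices to check that the cosets $\{b+\sqrt[\oR]I\mid b\in\BB\}$ are linearly independent. Suppose $p:=\sum_{b\in\BB}\lambda_b b\in\sqrt[\oR]I=\Ker M(\tilde L)$. Since $M(\tilde L)\succeq 0$ and $\deg p\le s-1$, the elementary argument behind (\ref{psdker}) shows $p\in\Ker M_{s-1}(\tilde L)=\Ker M_{s-1}(L^*)$, i.e. $\sum_{b\in\BB}\lambda_b c_b=0$ where $c_b$ is the $b$-th column of $M_{s-1}(L^*)$; linear independence of these columns forces all $\lambda_b=0$. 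Thus the $|\BB|$ cosets are linearly independent in the $|\BB|$-dimensional space $\PR/\sqrt[\oR]I$ and therefore form a basis.

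The one step I expect to require care is the positivity of the flat extension $\tilde L$: Theorem \ref{theoCF} as recalled here provides only the algebraic flat extension, whereas applying Theorem \ref{theoA} needs $M(\tilde L)\succeq 0$, and this preservation of positivity is not a statement about arbitrary symmetric matrices of equal rank but genuinely uses the Hankel structure of moment matrices (the flat extension theorem of Curto and Fialkow \cite{CF96}). The remaining steps are routine bookkeeping with ranks, kernels, and the inclusions (\ref{psdker}) and (\ref{lemWt}).
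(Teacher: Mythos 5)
Your proposal is correct, and it follows the same route that the surrounding machinery is designed for: sandwich $(\Ker M_s(L^*))$ between $I$ and $\sqrt[\oR]I$ using $\KK_{t,\succeq}\subseteq\HH_t^\perp$ and $\NNN_t\subseteq\sqrt[\oR]I$, then use the flat extension (Theorem \ref{theoCF}, with positivity preserved as in Curto--Fialkow) together with Theorem \ref{theoA} to conclude that this ideal is real radical, hence equal to $\sqrt[\oR]I$. Note that the paper itself does not reprove this theorem (it is quoted from \cite{LLR07}), but your argument matches the one given there and parallels the proof of Proposition \ref{prop2}(ii) in this paper; you also correctly isolate the one genuinely nontrivial ingredient, namely that the flat extension $\tilde L$ inherits positive semidefiniteness.
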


\subsection{Relating the rank condition and the prolongation-projection dimension conditions}
We now present some links between the rank condition (\ref{flat1}) and the conditions (\ref{ZRa})--(\ref{ZRb}). 
First we show that the condition (\ref{ZRa}) suffices to ensure that the rank condition (\ref{flat1}) holds at some later order.

\begin{proposition}\label{prop1}
Let $1\le s\le t$. If (\ref{ZRa}) holds with $\GG:=\HH_t\cup \WW_t$,
then $\rank M_s(L)=\rank M_{s-1}(L)$ for all $L\in \KK_{t+2s,\succeq}$.
\end{proposition}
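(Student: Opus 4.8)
The plan is to translate the projection-dimension hypothesis \eqref{ZRa} into a rank statement about moment matrices, using the fact that the kernel of a (generic, positive) moment matrix $M_s(L)$ is exactly the degree-$s$ part of $\Span_\oR(\GG)$. The starting observation is that for any $L\in\KK_{t',\succeq}$ with $t'\ge 2s$, the inclusion \eqref{eqn::HWt} gives $L\in\GG^\perp$ when $\GG=\HH_{t'}\cup\WW_{t'}$; but here $\GG=\HH_t\cup\WW_t$ with the \emph{fixed} smaller $t$, so I first need to check $\Span_\oR(\HH_t\cup\WW_t)\cap\PR_s\subseteq\Ker M_s(L)$ for $L\in\KK_{t+2s,\succeq}$. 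This holds because $\HH_t\subseteq\HH_{t+2s}$, and $\WW_t\cap\PR_s\subseteq\Span_\oR(\WW_{t+2s})\cap\PR_s$ (a polynomial $\xx^\alpha g$ with $g\in\NNN_t$, $|\alpha|\le\ttt$, of degree $\le s$ is a prolongation of $g$, and $\NNN_t\subseteq\NNN_{t+2s}$ by \eqref{incNt}, so it lies in $\WW_{t+2s}$ provided the prolongation degree $\le\lfloor(t+2s)/2\rfloor$, which is automatic once $|\alpha|+\deg g\le s\le\ttt+s\le\lfloor(t+2s)/2\rfloor$ — I should double-check this arithmetic but it is routine). Combined with $\Ker M_s(L)\supseteq\Span_\oR(\HH_{t+2s}\cup\WW_{t+2s})\cap\PR_s$ for generic $L\in\KK_{t+2s,\succeq}$ (which follows from genericity plus \eqref{psdker} and the fact that generic elements kill $\HH_{t+2s}$ and contain $\NNN_{t+2s}$ in their kernel), this gives $\Span_\oR(\GG)\cap\PR_s\subseteq\Ker M_s(L)$.

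Next I would compute dimensions. On one hand, $\rank M_s(L)=|\oT^n_s|-\dim\Ker M_s(L)\le|\oT^n_s|-\dim(\Span_\oR(\GG)\cap\PR_s)=\dim\pi_s(\GG^\perp)$, the last equality because $(\Span_\oR(\GG)\cap\PR_s)^\perp=\pi_s(\GG^\perp)$ in the finite-dimensional space $\PR_s$. Likewise $\rank M_{s-1}(L)\le\dim\pi_{s-1}(\GG^\perp)$. On the other hand, by \eqref{psdker} applied to the positive semidefinite $M_{\lfloor(t+2s)/2\rfloor}(L)$ (note $\lfloor(t+2s)/2\rfloor\ge s$), we have $\Ker M_s(L)\cap\PR_{s-1}=\Ker M_{s-1}(L)$, which is the standard fact that $\rank M_{s-1}(L)\le\rank M_s(L)$; equality of ranks is what we want to deduce. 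Under hypothesis \eqref{ZRa}, $\dim\pi_s(\GG^\perp)=\dim\pi_{s-1}(\GG^\perp)$, so the two upper bounds coincide; call this common value $N$.

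Finally I would close the argument by showing the bound $\rank M_s(L)\le N$ is in fact tight at $\rank M_{s-1}(L)$, i.e. that $\rank M_{s-1}(L)\ge N$. For this I would exhibit $N$ linearly independent rows/columns of $M_{s-1}(L)$: take $L_1,\dots,L_N\in\GG^\perp$ whose restrictions to $\PR_{s-1}$ form a basis of $\pi_{s-1}(\GG^\perp)$; since $L$ is generic in $\KK_{t+2s,\succeq}$, the linear space $\PP$ of admissible perturbations at $L$ equals $(\HH_{t+2s}\cup\WW_{t+2s})^\perp$ (Lemma \ref{lemdim} and its proof), so evaluating appropriate moment functionals shows $\Span_\oR(\GG)\cap\PR_{s-1}=\Ker M_{s-1}(L)$ exactly, not just $\subseteq$ — this upgrade of the inclusion to an equality is the crux, and it is precisely where genericity is used. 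Then $\rank M_{s-1}(L)=|\oT^n_{s-1}|-\dim(\Span_\oR(\GG)\cap\PR_{s-1})=\dim\pi_{s-1}(\GG^\perp)=N$, and the chain $N=\rank M_{s-1}(L)\le\rank M_s(L)\le\dim\pi_s(\GG^\perp)=N$ forces equality. The main obstacle I anticipate is exactly this last point: proving $\Ker M_{s-1}(L)=\Span_\oR(\GG)\cap\PR_{s-1}$ (equality, not just containment) for a generic $L\in\KK_{t+2s,\succeq}$, which requires a careful genericity/relative-interior argument of the type in the proof of Lemma \ref{lemdim}, extended from degree $\lfloor t/2\rfloor$ up to degree $s$; once that is in hand the dimension bookkeeping above is purely formal.
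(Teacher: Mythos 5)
There is a genuine gap, and it is located exactly where you anticipated trouble: the ``closing step.'' Your dimension bookkeeping establishes only the two upper bounds $\rank M_s(L)\le\dim\pi_s(\GG^\perp)$ and $\rank M_{s-1}(L)\le\dim\pi_{s-1}(\GG^\perp)$, with the hypothesis (\ref{ZRa}) equating the right-hand sides to a common value $N$. Two quantities sharing an upper bound need not be equal, so you then try to pin $\rank M_{s-1}(L)$ down to $N$ exactly, i.e.\ to upgrade the inclusion $\Span_\oR(\GG)\cap\PR_{s-1}\subseteq\Ker M_{s-1}(L)$ to an equality. That upgrade fails on two counts. First, the proposition is asserted for \emph{all} $L\in\KK_{t+2s,\succeq}$, whereas your genericity argument (via Lemma \ref{lemdim}) can only bear on a generic $L$; for $L=0$ one has $\rank M_{s-1}(L)=0$, so $\rank M_{s-1}(L)=N$ is simply false in general (the conclusion $\rank M_s(L)=\rank M_{s-1}(L)$ is still true for $L=0$, but not for the reason you give). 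Second, even for a generic $L^*\in\KK_{t+2s,\succeq}$, the claimed equality is not a consequence of genericity: by (\ref{psdker}) one has $\Ker M_{s-1}(L^*)=\NNN_{t+2s}\cap\PR_{s-1}$, while $\GG=\HH_t\cup\WW_t$ is built from the kernel $\NNN_t$ at the \emph{lower} prolongation order $t$, and the inclusion $\NNN_t\subseteq\NNN_{t+2s}$ in (\ref{incNt}) can be strict. So $\Span_\oR(\GG)\cap\PR_{s-1}$ may be a proper subspace of $\Ker M_{s-1}(L^*)$, $\rank M_{s-1}(L^*)$ may be strictly less than $N$, and your chain of (in)equalities does not close.

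A secondary issue: in the first step you route through the claim ``$\Ker M_s(L)\supseteq\Span_\oR(\HH_{t+2s}\cup\WW_{t+2s})\cap\PR_s$''. To put a polynomial $f$ into $\Ker M_s(L)$ you need $L(m'f)=0$ for all $m'\in\oT^n_s$, and if $f$ is a combination of generators $g\in\HH_{t+2s}\cup\WW_{t+2s}$ of degree up to $t+2s$, the products $m'g$ can have degree up to $t+3s$ and need not lie in $\HH_{t+2s}\cup\WW_{t+2s}$; so this intermediate inclusion is not obviously available. The inclusion you actually need, $\Span_\oR(\HH_t\cup\WW_t)\cap\PR_s\subseteq\Ker M_s(L)$, does hold, but the clean way to see it is to check directly that $m'g\in\HH_{t+2s}\cup\WW_{t+2s}$ for $g\in\HH_t\cup\WW_t$ and $m'\in\oT^n_s$ (using $\deg(m'g)\le s+t$ for $g\in\HH_t$, and $\deg(m'\xx^\alpha)\le s+\ttt\le\lfloor(t+2s)/2\rfloor$ together with $\NNN_t\subseteq\NNN_{t+2s}$ for $g=\xx^\alpha h\in\WW_t$).

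The paper avoids the whole kernel-identification problem by arguing column-by-column. Hypothesis (\ref{ZRa}) alone already yields the decomposition (\ref{dims}), so each $m\in\oT^n_s$ can be written as $m=\sum_{b\in\BB}\lambda_b b+f$ with $\BB\subseteq\oT^n_{s-1}$ and $f\in\Span_\oR(\GG)\cap\PR_s$. One then shows $L(m'f)=0$ for every $m'\in\oT^n_s$ directly (via the membership $m'g\in\HH_{t+2s}\cup\WW_{t+2s}$ described above), which says that each degree-$s$ column of $M_s(L)$ is a linear combination of columns indexed by $\oT^n_{s-1}$; by symmetry the same holds for rows, and $\rank M_s(L)=\rank M_{s-1}(L)$ follows for every $L\in\KK_{t+2s,\succeq}$, generic or not, with no need to determine the kernel exactly. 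I would encourage you to rebuild your proof along these lines; the dimension-counting framing you set up is useful for stating the hypothesis (\ref{ZRa}), but the conclusion is a statement about linear dependence of columns, and the direct column argument is both shorter and strictly more robust.
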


\begin{proof}
Let $L\in \KK_{t+2s,\succeq}$. We show that $\rank M_s(L)=\rank M_{s-1}(L)$.
For this, pick $m,m'\in\oT^n_s$.
As in the proof of Theorem \ref{theoZR}, (\ref{dims}) holds and thus 
we can write 
$m= \sum_{b\in \BB}\lambda_b b +f$, where $\lambda_b\in\oR$, $f\in \Span_\oR(\GG)$, and
$\BB\subseteq \oT^n_{s-1}$.
(Note that (\ref{ZRb}) was not used to derive this.)
Then, $mm'= \sum_{b\in\BB}\lambda_b m'b + m'f.$
It suffices now to show that $L(m'f)=0$. Indeed this will imply 
$M(L)_{m',m}=L(mm')=\sum_{b\in \BB}\lambda_b L(m'b)=\sum_{b\in \BB}\lambda_b M(L)_{m',b}$,
that is, the $m$th column of $M(L)$ is a linear combination of its columns indexed by 
$b
\in\BB$, thus giving the desired result.

We now  show that $L(m'g)=0$ for all $g\in \HH_t\cup\WW_t$.
By assumption, $L\in \KK_{t+2s,\succeq}\subseteq \HH_{t+2s}^\perp \cap \WW_{t+2s}^\perp$ (recall (\ref{eqn::HWt})).
If $g\in \HH_t$, then $m'g\in \HH_{t+s}\subseteq \HH_{t+2s}$ and thus
$L(m'g)=0$.
If $g\in \WW_t$, then $g=\xx^\alpha h$,
 where $h\in \NN_t$ and $|\alpha|\le \ttt$. Hence,
$m'g = m'\xx^\alpha h$, where $\deg(m'\xx^\alpha)\le s+\ttt \le \lfloor( 2s+t)/2\rfloor$
and $h\in \NN_t \subseteq \NN_{t+2s}$ (by (\ref{incNt})), implying 
$m'g\in \WW_{t+2s}$ and thus $L(m'g)=0$.
\end{proof}

We now show that the rank condition (\ref{flat1}) is in fact equivalent to the following stronger version of the conditions (\ref{ZRa})-(\ref{ZRb}) with
$\GG=\GG_t=\HH_t\cup\WW_t$:
\begin{align}
\sublabon{equation}
\dim \pi_{2s}(\GG_t^\perp)=\dim \pi_{s-1}(\GG_t^\perp), \label{ZR++a}\\
\dim \pi_{2s}(\GG_t^\perp)=\dim \pi_{2s}((\GG_t^+)^\perp). \label{ZR++b}
\end{align}
\sublaboff{equation}
\begin{proposition}\label{prop2}
Let $L^*$ be a generic element of $\KK_{t,\succeq}$ and $1\le s\le \ttt$.
\begin{itemize}
\item[(i)] Assume (\ref{flat1}) holds. Then (\ref{ZR++a}) holds, and 
(\ref{ZR++b}) holds as well if $s\ge D$.
\item[(ii)] Assume (\ref{ZR++a})-(\ref{ZR++b}) hold. Then,
(\ref{flat1}) holds,
the ideal $J$ obtained in Theorem \ref{theoZR} is real radical and satisfies
$J=(\Ker M_s(L^*))\subseteq I(V_\oR(I))$ and,
given $\BB\subseteq \oT^n_{s-1}$, 
$\BB$ satisfies (\ref{dims-1}) if and only if $\BB$ indexes a column basis of $M_{s-1}(L^*)$. 
Furthermore, $J=\sqrt[\oR]I$ if $s\ge D$.
\end{itemize}
\end{proposition}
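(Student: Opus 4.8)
The argument hinges on one estimate tying the moment matrix of the generic $L^*$ to the projections of $\GG_t^\perp$. Since $M_\ttt(L^*)\succeq0$ and $\NNN_t=\Ker M_\ttt(L^*)$, relation (\ref{psdker}) gives $\Ker M_\sigma(L^*)=\NNN_t\cap\PR_\sigma$ for $1\le\sigma\le\ttt$, and as $\NNN_t\subseteq\WW_t\subseteq\GG_t$ this kernel lies in $\Span_\oR(\GG_t)\cap\PR_\sigma$; passing to orthogonals yields $\dim\pi_\sigma(\GG_t^\perp)\le\rank M_\sigma(L^*)$ for $1\le\sigma\le\ttt$, call it $(\ast)$. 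The plan is to complement $(\ast)$ in opposite directions: in (i) a flat--extension argument bounds $\dim\pi_{2s}(\GG_t^\perp)$ from above, and in (ii) Theorem \ref{theoZR}, applied at degree parameter $2s$, bounds $\rank M_s(L^*)$ from above, so that in each case $(\ast)$ becomes an equality transporting the rank condition (\ref{flat1}) to the projection conditions (\ref{ZR++a})--(\ref{ZR++b}) and back.

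For (i), assume (\ref{flat1}). By the flat--extension theorem (Theorem \ref{theoCF}) and propagation of flatness, $M_\ttt(L^*)$ is a flat extension of $M_{s-1}(L^*)$; it determines a unique $\tilde L\in(\PR)^*$ with $\pi_{2\ttt}(\tilde L)=\pi_{2\ttt}(L^*)$, $M(\tilde L)\succeq0$, and (Theorem \ref{theoA}) $R:=(\Ker M_s(L^*))=\Ker M(\tilde L)$ a zero--dimensional real radical ideal with $\dim\PR/R=\rank M_s(L^*)=:N$; moreover $R\cap\PR_\sigma=\NNN_t\cap\PR_\sigma$ for $\sigma\le\ttt$, so $\NNN_t=R\cap\PR_\ttt$ and hence (as $R$ is an ideal) $\WW_t\subseteq R$. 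A column basis $\BB\subseteq\oT^n_{s-1}$ of $M_{s-1}(L^*)$ is a basis of $\PR/R$, and the flat matrix $M_s(L^*)$ yields a border basis $F_0=\{m-\varphi(m):m\in\partial\BB\}$ of $R$ with $\deg(m-\varphi(m))\le s\le\ttt$ and $F_0\subseteq\NNN_t$; a degree--compatible reduction by $F_0$, together with the factorisation of each needed prolongation $\xx^\alpha f_m$ (of degree $\le2s\le2\ttt$) as $\xx^{\alpha'}(\xx^{\alpha''}f_m)$ with $\xx^{\alpha''}f_m\in\NNN_t$ and $|\alpha'|\le\ttt$, shows $R\cap\PR_{2s}\subseteq\Span_\oR(\WW_t)\subseteq\Span_\oR(\GG_t)$. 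Hence $\dim\pi_{2s}(\GG_t^\perp)\le\dim\PR_{2s}/(R\cap\PR_{2s})=N$ (the affine Hilbert function of $R$ is already $N$ at $2s$, since $\BB\subseteq\oT^n_{s-1}$), while $\dim\pi_{s-1}(\GG_t^\perp)\le\dim\pi_{2s}(\GG_t^\perp)$ by monotonicity. When $s\ge D$ one also has $h_j\in\Ker M_s(L^*)$ (because $\xx^\alpha h_j\in\HH_t$ for $|\alpha|\le s$), so $I\subseteq R$ and $\Span_\oR(\GG_t)\subseteq R=\sqrt[\oR]I$ (Theorem \ref{thmrank}); then the inclusion above is an equality, $\dim\pi_\sigma(\GG_t^\perp)=N$ for $s-1\le\sigma\le2\ttt$, which is (\ref{ZR++a}), and since $\GG_t^+\subseteq\sqrt[\oR]I$ the same identity at degree $2s$ forces $\dim\pi_{2s}((\GG_t^+)^\perp)=\dim\pi_{2s}(\GG_t^\perp)$, i.e. (\ref{ZR++b}). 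For $s<D$ one proves (\ref{ZR++a}) alone from the two--sided estimate $R\cap\PR_\sigma\subseteq\Span_\oR(\GG_t)\cap\PR_\sigma\subseteq\sqrt[\oR]I\cap\PR_\sigma$ combined with the eventual constancy of the affine Hilbert functions of the zero--dimensional ideals $R\subseteq\sqrt[\oR]I$.

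For (ii), assume (\ref{ZR++a})--(\ref{ZR++b}). By monotonicity, (\ref{ZR++a}) makes $\dim\pi_\sigma(\GG_t^\perp)$ constant, say $=N$, for $s-1\le\sigma\le2s$, so (\ref{ZRa}) holds at level $2s$ and (\ref{ZR++b}) is (\ref{ZRb}) at level $2s$. Applying Theorem \ref{theoZR} at degree parameter $2s$ with $\GG=\GG_t$ (note $\GG_t\subseteq\sqrt[\oR]I$; invoke Remark \ref{remweak}(i) if $2s<D$) yields $\BB'\subseteq\oT^n_{2s-1}$ connected to $1$, a border basis $F_0$ making $\BB'$ a basis of $\PR/J$ for $J=(F_0)$, with $J\subseteq\sqrt[\oR]I$, $J\cap\PR_{2s}=\Span_\oR(\GG_t)\cap\PR_{2s}$, $\pi_{2s}(\DD[J])=\pi_{2s}(\GG_t^\perp)$, and $\dim\PR/J=\dim\pi_{2s}(\GG_t^\perp)=N$. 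As $L^*\in\GG_t^\perp$, $\pi_{2s}(L^*)$ lifts to some $\hat L\in\DD[J]$; then $M_s(\hat L)=M_s(L^*)$, and since $J\subseteq\Ker M(\hat L)$ the kernel of $M_s(\hat L)$ contains $J\cap\PR_s=\Span_\oR(\GG_t)\cap\PR_s$, so $\rank M_s(L^*)\le\dim\pi_s(\GG_t^\perp)=N$; with $(\ast)$ this forces $\rank M_s(L^*)=\rank M_{s-1}(L^*)=N$, i.e. (\ref{flat1}), and by equality of dimensions $\Ker M_s(L^*)=J\cap\PR_s$. Feeding (\ref{flat1}) into Theorems \ref{theoCF}--\ref{theoA} makes $(\Ker M_s(L^*))$ zero--dimensional real radical of quotient dimension $N=\dim\PR/J$; from $(\Ker M_s(L^*))=(J\cap\PR_s)\subseteq J$ we get $J=(\Ker M_s(L^*))\subseteq I(V_\oR(I))$; intersecting $\Ker M_s(L^*)=\Span_\oR(\GG_t)\cap\PR_s$ with $\PR_{s-1}$ gives $\Ker M_{s-1}(L^*)=\Span_\oR(\GG_t)\cap\PR_{s-1}$, so for $\BB\subseteq\oT^n_{s-1}$ condition (\ref{dims-1}) is exactly the statement that $\BB$ indexes a column basis of $M_{s-1}(L^*)$; finally, if $s\ge D$ then $I\subseteq(\Ker M_s(L^*))=J$, whence $\sqrt[\oR]I\subseteq\sqrt[\oR]J=J\subseteq\sqrt[\oR]I$ and $J=\sqrt[\oR]I$.

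The step I expect to be hardest is the $s<D$ subcase of (i): there the clean identity $\dim\pi_\sigma(\GG_t^\perp)=\rank M_\sigma(L^*)$ (immediate from $\Span_\oR(\GG_t)\subseteq R$ when $s\ge D$) may fail, and one must show by hand that no cancellation among the high--degree prolongations in $\HH_t$ produces a low--degree element of $\Span_\oR(\GG_t)$ lying outside $\NNN_t=\Ker M_\ttt(L^*)$ -- this is where the degree bookkeeping against the ideal structure of the kernel is most delicate. In (ii) the one indispensable ingredient is the equality $J\cap\PR_{2s}=\Span_\oR(\GG_t)\cap\PR_{2s}$ delivered by Theorem \ref{theoZR}: it holds precisely because of the border condition (\ref{ZR++b}), and it is what upgrades the inequality $(\ast)$ to the rank equality (\ref{flat1}); condition (\ref{ZRa}) alone would not do.
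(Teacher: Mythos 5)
Your part (ii) is essentially the paper's own argument: apply Theorem \ref{theoZR} at the pair $(t,2s)$, lift $\pi_{2s}(L^*)$ to an element of $\DD[J]$, and play the codimension of $J\cap\PR_{2s}=\Span_\oR(\GG_t)\cap\PR_{2s}$ off against $\rank M_s(L^*)$. Your estimate $(\ast)$ is correct, the identification of (\ref{dims-1}) with column bases of $M_{s-1}(L^*)$ is the paper's, and the final appeal to Theorems \ref{theoCF}--\ref{theoA} (together with the observation that two nested zero-dimensional ideals of equal codimension coincide) is sound.

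Part (i), however, takes a different route and has genuine gaps. First, ``propagation of flatness'' is not available: Theorem \ref{theoCF} produces a flat extension of $\pi_{2s}(L^*)$ only, so you may not conclude that $M_\ttt(L^*)$ is flat over $M_{s-1}(L^*)$, nor that $\NNN_t=R\cap\PR_\ttt$ for your $R:=(\Ker M_s(L^*))$; when $s<D$ there is no reason why $\Ker M_\ttt(L^*)$ should lie in $(\Ker M_s(L^*))$. Second, and more seriously, the inclusion $R\cap\PR_{2s}\subseteq\Span_\oR(\WW_t)$, on which your upper bound $\dim\pi_{2s}(\GG_t^\perp)\le N$ entirely rests, is not established by the proposed factorisation: to place $\xx^{\alpha''}f_m$ in $\NNN_t=\Ker M_\ttt(L^*)$ one must use the kernel-ideal property of positive semidefinite truncated moment matrices, which requires $\deg(\xx^{\alpha''}f_m)\le\ttt-1$; combined with $|\alpha'|\le\ttt$ this only reaches terms of degree $2\ttt-1\le t-1$, while border division of an element of $R\cap\PR_{2s}$ can produce terms $\xx^{\alpha}f_m$ of degree exactly $2s$, which equals $2\ttt$ when $s=\ttt$. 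Third, the case $s<D$ of (\ref{ZR++a}) is only sketched, and the proposed sandwich runs the wrong way: the affine Hilbert function of $\sqrt[\oR]I$ at degree $s-1$ is indeed a lower bound for $\dim\pi_{s-1}(\GG_t^\perp)$, but it is in general at most $N=\rank M_{s-1}(L^*)$, so ``eventual constancy'' yields nothing at degree $s-1$. The paper circumvents all three difficulties with a different mechanism: Lemma \ref{lemrank} transfers the rank condition (\ref{flat1}) from the generic $L^*$ to \emph{every} $L\in\GG_t^\perp$ via the kernel chain (\ref{kernels}); an induction on $|\gamma|\le 2s$ then shows that the restriction map $\pi_{2s}(\GG_t^\perp)\to\pi_{s-1}(\GG_t^\perp)$ is injective, which proves (\ref{ZR++a}) with no hypothesis relating $s$ and $D$; and for (\ref{ZR++b}) each $L\in\GG_t^\perp$ is flat-extended individually to a $\tilde L$ shown to vanish on $I(V_\oR(I))\supseteq\GG_t^+$. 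You should adopt that mechanism (or supply genuinely new arguments for the three points above) before part (i) can be considered proved.
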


The proof being a bit technical is postponed to Section \ref{proofprop2}.
An immediate consequence of Proposition \ref{prop2} is that the rank condition at 
order $(t,s)$ implies the prolongation-projection dimension conditions
(\ref{ZRa})-(\ref{ZRb}) at the same order $(t,s)$.

\begin{corollary}\label{corlink}
Assume $D\le s\le \ttt$ and let $\GG=\GG_t=\HH_t\cup\WW_t$.
Then, 
$$\text{(\ref{flat1})} \Longleftrightarrow  \text{(\ref{ZR++a})-(\ref{ZR++b}) }
\Longrightarrow \text{ (\ref{ZRa})-(\ref{ZRb})}.$$
\end{corollary}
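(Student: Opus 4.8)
The plan is to separate the two assertions of the corollary. The equivalence between the rank condition (\ref{flat1}) and the pair (\ref{ZR++a})--(\ref{ZR++b}) needs no further argument: it is exactly Proposition~\ref{prop2}, whose part~(i) shows that (\ref{flat1}) implies (\ref{ZR++a}) and also (\ref{ZR++b}) (the latter using $s\ge D$), and whose part~(ii) shows the reverse implication. So the only content left to prove is that (\ref{ZR++a})--(\ref{ZR++b}) imply (\ref{ZRa})--(\ref{ZRb}) (with $\GG=\GG_t$), and I would handle this by a short monotonicity argument on the dimensions of the projections of $\GG_t^\perp$.

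First I would isolate two elementary facts. (a) \emph{Monotonicity}: for a linear subspace $V$ of a dual space $(\PR_u)^*$ and indices $s_1\le s_2\le u$, one has $\pi_{s_1}=\pi_{s_1}\circ\pi_{s_2}$, so $\pi_{s_1}(V)$ is a linear image of $\pi_{s_2}(V)$, whence $\dim\pi_{s_1}(V)\le\dim\pi_{s_2}(V)$. (b) \emph{Comparison of $\GG_t$ and $\GG_t^+$}: for every $r\le t$ one has $\pi_r((\GG_t^+)^\perp)\subseteq\pi_r(\GG_t^\perp)$, since a functional on $\PR_{t+1}$ vanishing on $\GG_t^+$ restricts on $\PR_t$ to a functional vanishing on $\GG_t$; equivalently, $\Span_\oR(\GG_t)\cap\PR_r\subseteq\Span_\oR(\GG_t^+)\cap\PR_r$, which one dualizes (using that $(\pi_r(\GG^\perp))^\perp=\Span_\oR(\GG)\cap\PR_r$ in the finite-dimensional space $\PR_r$). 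I would also record the index bookkeeping: $\GG_t\subseteq\PR_t$ and $\GG_t^+\subseteq\PR_{t+1}$, while $s\le\ttt\le t/2$ gives $2s\le t$ and $2s\le t+1$, so all of $\pi_{s-1},\pi_s,\pi_{2s}$ are legitimately defined on the spaces appearing below.

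With these in hand, (\ref{ZRa}) is immediate: fact~(a) yields $\dim\pi_{s-1}(\GG_t^\perp)\le\dim\pi_s(\GG_t^\perp)\le\dim\pi_{2s}(\GG_t^\perp)$, and (\ref{ZR++a}) says the two ends are equal, so all three coincide; in particular $\dim\pi_s(\GG_t^\perp)=\dim\pi_{s-1}(\GG_t^\perp)$. For (\ref{ZRb}), fact~(b) with $r=2s$ gives the inclusion $\pi_{2s}((\GG_t^+)^\perp)\subseteq\pi_{2s}(\GG_t^\perp)$, and (\ref{ZR++b}) says these subspaces have equal (finite) dimension, hence are equal; applying the linear map $\pi_s$ to both sides and using $\pi_s=\pi_s\circ\pi_{2s}$ gives $\pi_s((\GG_t^+)^\perp)=\pi_s(\GG_t^\perp)$, so in particular their dimensions agree, which is (\ref{ZRb}).

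I do not expect a real obstacle in this last step; the only points requiring care are the ambient-space bookkeeping ($\GG_t^\perp\subseteq(\PR_t)^*$ versus $(\GG_t^+)^\perp\subseteq(\PR_{t+1})^*$) and the index inequalities $2s\le t$, $2s\le t+1$, both consequences of $s\le\ttt$. The genuine work behind the corollary sits entirely in Proposition~\ref{prop2}, whose proof (the technical core) is carried out in Section~\ref{proofprop2}.
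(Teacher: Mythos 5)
Your proof is correct and follows essentially the same route as the paper: the equivalence is delegated to Proposition~\ref{prop2}, and the implication to (\ref{ZRa})--(\ref{ZRb}) is obtained by the monotonicity sandwich $\dim\pi_{s-1}\le\dim\pi_s\le\dim\pi_{2s}$ together with deducing the set equality $\pi_{2s}(\GG_t^\perp)=\pi_{2s}((\GG_t^+)^\perp)$ from (\ref{ZR++b}) and projecting it down to level $s$. The paper compresses this into a single sentence; your version merely makes the inclusion $\pi_r((\GG_t^+)^\perp)\subseteq\pi_r(\GG_t^\perp)$ and the index bookkeeping explicit.
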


\begin{proof}
Indeed,  $\pi_s(\GG_t^\perp)=\pi_s((\GG_t^+)^\perp$ follows directly
from 
$\pi_{2s}(\GG_t^\perp)=\pi_{2s}((\GG_t^+)^\perp)$.
\end{proof}

It is shown in \cite{LLR07} that the rank condition (\ref{flat1}) holds at order $(s,t)$ large enough with $D\le s\le \ttt$. Hence
the same holds for the conditions 
(\ref{ZR++a})-(\ref{ZR++b}) (and thus for (\ref{ZRa})-(\ref{ZRb})), which
will imply 
the termination of the prolongation-projection algorithm based on Theorem \ref{theoZR}.






\subsection{Proof of Proposition \ref{prop2}} \label{proofprop2}
First we note that the rank condition (\ref{flat1}) is in fact a property of the whole cone $\KK_{t,\succeq}$ and its superset $\GG_t^\perp = \HH_{t}^\perp \cap \WW_{t}^\perp$.

\begin{lemma}\label{lemrank}
 If (\ref{flat1}) holds for some generic $L^*\in\KK_{t,\succeq}$, then 
(\ref{flat1}) holds for {\em all} $L^*\in \GG_t^\perp$.
\end{lemma}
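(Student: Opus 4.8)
The goal is to show that the rank flatness condition \eqref{flat1}, a priori stated for a single generic $L^*\in\KK_{t,\succeq}$, in fact holds for every $L^*$ in the larger space $\GG_t^\perp=\HH_t^\perp\cap\WW_t^\perp$. The key point is that a generic element dominates all others in the kernel order, so the plan is to reduce an arbitrary $L\in\GG_t^\perp$ to the generic case by a perturbation argument, exactly along the lines of the proof of Lemma \ref{lemdim}.

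First I would fix a generic $L^*\in\KK_{t,\succeq}$ for which \eqref{flat1} holds, i.e. $\rank M_s(L^*)=\rank M_{s-1}(L^*)$, and take an arbitrary $L\in\GG_t^\perp$. By Lemma \ref{lemdim} we know $\GG_t^\perp=(\HH_t\cup\WW_t)^\perp$ equals the space $\PP_t$ of perturbations at $L^*$, so there is $\epsilon>0$ with $L^*\pm\epsilon L\in\KK_{t,\succeq}$; equivalently $L^*+\epsilon L,\ L^*-\epsilon L\in\KK_{t,\succeq}$ and $L^*=\tfrac12\big((L^*+\epsilon L)+(L^*-\epsilon L)\big)$. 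Since $L^*$ is generic, Lemma \ref{lemgeneric}(iii) gives $\Ker M_u(L^*)\subseteq\Ker M_u(L^*\pm\epsilon L)$ for all $u\le\ttt$, and combining the two inclusions at $u$ with \eqref{psdkerb} yields $\Ker M_u(L^*)=\Ker M_u(L^*+\epsilon L)\cap\Ker M_u(L^*-\epsilon L)$; in particular $\Ker M_u(L^*)\subseteq\Ker M_u(L^*)$ forces the two perturbed kernels to contain $\Ker M_u(L^*)$, hence $\rank M_u(L^*\pm\epsilon L)\le\rank M_u(L^*)$. By genericity (maximality of rank, Lemma \ref{lemgeneric}(i)--(ii)) actually equality holds: $\rank M_u(L^*\pm\epsilon L)=\rank M_u(L^*)$ for $u\in\{s-1,s\}$. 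Therefore \eqref{flat1} holds for $L^*+\epsilon L$ and for $L^*-\epsilon L$.

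Next I would leverage linearity: from $L^*+\epsilon L$ and $L^*-\epsilon L$ both satisfying the flatness relation, and $\Ker M_s(L^*+\epsilon L)\cap\PR_{s-1}=\Ker M_{s-1}(L^*+\epsilon L)$ (and similarly for the other sign) by \eqref{psdker}, one deduces via Theorem \ref{theoCF} that each of $L^*\pm\epsilon L$ extends to a full linear functional; but more directly, a general linear functional $L'\in(\PR_{2s})^*$ satisfies \eqref{flat1} iff the columns of $M_s(L')$ indexed by $\oT^n_s\setminus\oT^n_{s-1}$ lie in the span of the columns indexed by $\oT^n_{s-1}$, a linear condition on $L'$ whose validity I have just established on the two points $L^*\pm\epsilon L$; however linearity of this span condition is subtle since the column space varies with $L'$. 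To avoid that issue I would instead argue through kernels: $\rank M_s(L')=\rank M_{s-1}(L')$ is equivalent to $\dim\Ker M_s(L')=\dim\Ker M_{s-1}(L')+(|\oT^n_s|-|\oT^n_{s-1}|)$, i.e. to $\Span_\oR(\Ker M_{s-1}(L'))^+\cap\PR_s$ already having full expected dimension inside $\Ker M_s(L')$; and using $\Ker M_s(L^*)=\Ker M_s(L^*+\epsilon L)\cap\Ker M_s(L^*-\epsilon L)$ together with the reverse containment coming from genericity, one gets $\Ker M_s(L^*)=\Ker M_s(L^*+\epsilon L)=\Ker M_s(L^*-\epsilon L)$ and likewise at degree $s-1$. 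Then $L=\tfrac{1}{2\epsilon}\big((L^*+\epsilon L)-(L^*-\epsilon L)\big)$ and \eqref{psdkerb} applied to $M_s(L^*+\epsilon L)$ and $M_s(-(L^*-\epsilon L))$ (note $M$ is linear, so $M_s(-(L^*-\epsilon L))=-M_s(L^*-\epsilon L)$, with the same kernel) gives $\Ker M_s(L)\supseteq\Ker M_s(L^*+\epsilon L)\cap\Ker M_s(L^*-\epsilon L)=\Ker M_s(L^*)$, and symmetrically from $L^*=\tfrac12((L^*+\epsilon L)+(L^*-\epsilon L))$ one gets the reverse; hence $\Ker M_s(L)=\Ker M_s(L^*)$ and $\Ker M_{s-1}(L)=\Ker M_{s-1}(L^*)$. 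Since dimension counts are determined by the kernels, $\rank M_s(L)=\rank M_s(L^*)=\rank M_{s-1}(L^*)=\rank M_{s-1}(L)$, which is \eqref{flat1} for $L$.

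\textbf{Main obstacle.} The delicate point is the passage from ``\eqref{flat1} holds at the two perturbed points $L^*\pm\epsilon L$'' to ``\eqref{flat1} holds at $L$'': naively the rank function is neither linear nor concave, so I must route everything through the kernel identities \eqref{psdker}--\eqref{psdkerb} and the genericity characterization of Lemma \ref{lemgeneric}, which pin down $\Ker M_s$ and $\Ker M_{s-1}$ exactly rather than merely bounding ranks. Once the kernels of $L$, $L^*$, and the two perturbations are shown to coincide in degrees $s-1$ and $s$, the rank equality is immediate. A secondary point to check is that the perturbation bound $\epsilon$ from Lemma \ref{lemdim} indeed works for an \emph{arbitrary} $L\in\GG_t^\perp$ (not just one in some open set), which is exactly the content of the identity $\PP_t=(\HH_t\cup\WW_t)^\perp$ established there.
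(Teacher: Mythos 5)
Your perturbation argument does reach the one inclusion that actually matters, namely $\Ker M_u(L^*)\subseteq \Ker M_u(L)$ for $u=s-1,s$: writing $L=\tfrac{1}{2\epsilon}\bigl((L^*+\epsilon L)-(L^*-\epsilon L)\bigr)$ with $L^*\pm\epsilon L\in\KK_{t,\succeq}$ and applying Lemma~\ref{lemgeneric}(iii) to each term gives it. (The paper obtains the same inclusion in one line, from $\Ker M_s(L^*)=\NN_t\cap\PR_s$ together with (\ref{NWt}) and (\ref{psdker}), with no perturbation needed.) But the claims you stack on top of this inclusion are false. Genericity of $L^*$ only yields $\rank M_u(L^*)\ge\rank M_u(L^*\pm\epsilon L)$, i.e.\ the same direction as the kernel inclusion, so it cannot give the asserted equality $\rank M_u(L^*\pm\epsilon L)=\rank M_u(L^*)$ (that would require knowing the perturbed points are themselves generic, which your choice of $\epsilon$ does not guarantee: $\PP_t$ only asks that $L^*\pm\epsilon L$ lie in the cone, not in its relative interior). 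More seriously, the identity $\Ker M_s(L)=\Ker M_s(L^*)$ is simply wrong: $L=0$ belongs to $\GG_t^\perp$ and has $\Ker M_s(L)=\PR_s$. Your ``symmetric'' use of $L^*=\tfrac12((L^*+\epsilon L)+(L^*-\epsilon L))$ bounds $\Ker M_s(L^*)$ from below by the intersection of the two perturbed kernels; it says nothing about $\Ker M_s(L)$ being small. With only the inclusions $\Ker M_u(L^*)\subseteq\Ker M_u(L)$ one gets $\rank M_{s-1}(L)\le \rank M_s(L)\le \rank M_s(L^*)=\rank M_{s-1}(L^*)$, which does not close the gap, so the final chain of rank equalities does not go through.

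The missing step is precisely the one you flagged as ``subtle'' and then set aside. Since $M_s(L^*)\succeq 0$ and $\rank M_s(L^*)=\rank M_{s-1}(L^*)$, for every monomial $m\in\oT^n_s\setminus\oT^n_{s-1}$ there is $r_m\in\PR_{s-1}$ with $m-r_m\in\Ker M_s(L^*)$ (for a positive semidefinite matrix, rank equality with a principal submatrix forces the extra columns into the span of the earlier ones, and these column relations live in the kernel). By the inclusion, $m-r_m\in\Ker M_s(L)$ for every such $m$, so each degree-$s$ column of $M_s(L)$ is a combination of the columns indexed by $\oT^n_{s-1}$; by symmetry of $M_s(L)$ the same holds for the rows, whence $\rank M_s(L)=\rank M_{s-1}(L)$. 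No equality of kernels, and no positivity of $M_s(L)$, is needed. This is exactly how the paper concludes from its version of the kernel inclusion; your proof needs this argument in place of the false kernel identities.
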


\begin{proof}
Let $L\in \GG_t^\perp$. We have
\begin{equation}\label{kernels}
\Ker M_s(L^*) = \Ker M_\ttt(L^*)\cap \PR_s = \NN_t\cap \PR_s
\subseteq \Ker M_\ttt(L)\cap \PR_s \subseteq \Ker M_s(L),
\end{equation}
where the first equality holds by (\ref{psdker}),
the first inclusion holds by (\ref{NWt}), and the second
 one holds since $M_s(L)$ is a principal submatrix of $M_\ttt(L)$.
This implies directly that  $\rank M_s(L)=\rank M_{s-1}(L)$.
\hfill\end{proof}

We now give the proof for Proposition \ref{prop2}.
Let $L^*$ be a generic element of $\KK_{t,\succeq}$.

(i) Assume that (\ref{flat1}) holds. First we show (\ref{ZR++a}), i.e. we show that
$\dim \pi_{2s}(\GG_t^\perp)=\dim \pi_{s-1}(\GG_t^\perp)$. For this, consider
 the linear mapping
$$\begin{array}{llll}
\psi: & \pi_{2s}(\GG_t^\perp) & \rightarrow & \pi_{s-1}(\GG_t^\perp)\\
& \pi_{2s}(L) & \mapsto & \pi_{s-1}(L).
\end{array}$$
As $\psi$ is onto, it suffices to show that $\psi$ is one-to-one.
For this assume $\pi_{s-1}(L)=0$ for some $L\in \GG_t^\perp$. We show that
$\pi_{2s}(L)=0$, i.e. $L(\xx^\gamma)=0$ for all $|\gamma|\le 2s$ by induction on 
$|\gamma|\le 2s$. The case $|\gamma|\le s-1$ holds by assumption.
Let $s\le |\gamma|\le 2s$ and write $\gamma$ as $\gamma =\alpha+\beta$ where $|\alpha|=s$ and $|\beta|\le s$.
By Lemma \ref{lemrank}, $\rank M_s(L)=\rank M_{s-1}(L)$. Hence the $\alpha$th column of $M_s(L)$ can be written as a linear combination of the columns indexed by $\oT^n_{s-1}$.
This gives

$M_s(L)_{\beta,\alpha}=\sum_{|\delta|\le s-1} \lambda_\delta M_s(L)_{\beta,\delta}$ for some $\lambda_\delta\in\oR$.
As $|\beta+\delta|\le |\gamma|-1$, we have
$M_s(L)_{\beta,\delta}= L(\xx^{\beta+ \delta}) =0$ by the induction assumption, implying 
$L(\xx^\gamma)=M_s(L)_{\beta,\alpha} =0$.

We now assume moreover $s\ge D$. We show the inclusion $\pi_{2s}(\GG_t^\perp)\subseteq \pi_{2s}((\GG_t^+)^\perp)$, which implies (\ref{ZR++b}).
Let $L\in \GG_t^\perp$. As $\rank M_s(L)=\rank M_{s-1}(L)$,
we can apply Theorem \ref{theoCF} and deduce the existence of $\tilde L\in (\PR)^*$ for which
$\pi_{2s}(\tilde L)=\pi_{2s}(L)$ and 
$\Ker M(\tilde L)=(\Ker M_s(L))$.
It suffices now to show that $\tilde L\in (\GG_t^+)^\perp$. We show a
 stronger result, namely that $\tilde L\in I(V_\oR(I))^\perp$.
As $s\ge D$, we know from Theorem \ref{thmrank}
that $I(V_\oR(I))=(\Ker M_s(L^*))$.
Pick $p\in I(V_\oR(I))$ and write it as $p=\sum_l u_l g_l$, where $u_l\in\PR$ and 
$g_l\in \Ker M_s(L^*)$; we show that $\tilde L(p)=0$. 
By (\ref{kernels}),  $g_l\in \Ker M_s(L)$ and thus, as $M_s(L)=M_s(\tilde L)$,
$g_l\in \Ker M_s(\tilde L)$.
Therefore, $p$ lies in $(\Ker M_s(\tilde L))=\Ker M(\tilde L)$, which gives
$\tilde L(p)=0$.

(ii) Assume now that (\ref{ZR++a})-(\ref{ZR++b}) hold. Then, 
 (\ref{ZRa})--(\ref{ZRb}) holds for the pair $(t,2s)$ (and $\GG=\GG_t$).
 Although we do not assume $2s\ge D$, the conclusion of Theorem \ref{theoZR} partially holds, as observed in Remark~\ref{remweak}~(i).
Namely, we can find an ideal $J$ satisfying $J\subseteq I(V_\oR(I))$,
$J\cap\PR_{2s}=\Span_\oR(\GG_t)\cap \PR_{2s}$,
$\pi_{2s}(D[J])=\pi_{2s}(\GG_t^\perp)$, and $I\subseteq J$ if $2s\ge D$.
Moreover, 
there exists a set $\BB\subseteq \oT^n_{s-1}$ which is a basis of $\PR/J$ and satisfies 
the following analogue of (\ref{dims}):
\begin{equation}\label{dim2s}
\PR_{2s}= \Span_\oR(\BB) \oplus  (\Span_\oR(\GG_t)\cap \PR_{2s}). 
\end{equation}
We show that $\rank M_s(L^*)=\rank M_{s-1}(L^*)$.
As $L^*\in \GG_t^\perp$, there exists $\tilde L\in D[J]$ for which
$\pi_{2s}(L^*)=\pi_{2s}(\tilde L)$. Thus 
$M_s(L^*)=M_s(\tilde L)$, and $J\subseteq \Ker M(\tilde L)$ since
$\tilde L\in D[J]$. It suffices to show that 
$\rank M_s(\tilde L)=\rank M_{s-1}(\tilde L)$.
For this, as in the proof of Proposition \ref{prop1}, pick $m,m'\in\oT^n_s$.
Using (\ref{dim2s}), we can write
$m=\sum_{b\in \BB}\lambda_b b +f$, where $\lambda_b\in\oR$, $f\in \Span_\oR(\GG_t) \cap\PR_{2s}\subseteq J\subseteq \Ker M(\tilde L)$, so that 
 $\tilde L(m'm) =\sum_{b\in \BB}\lambda_b \tilde L(m'b)$, which gives the desired result:
$\rank M_s(\tilde L)=\rank M_{s-1}(\tilde L)$.

Let $\BB_1,\BB_2\subseteq \oT^n_{s-1}$, where $\BB_1$ 
satisfies (\ref{dims-1}) and $\BB_2$ indexes a column basis of $M_{s-1}(L^*)$.
Then
\begin{equation}\label{eq1}
|\BB_1|=\dim \PR/J \le \rank M_{s-1}(L^*) \ (=|\BB_2|) 
\end{equation}
since the columns of $M_{s-1}(L^*)$ indexed by $\BB_1$ are linearly independent
(direct verification, using (\ref{dims-1}) and the fact that 
$\Ker M_{s-1}(L^*)\subseteq\Ker M_\ttt(L^*)= \NNN_t\subseteq \Span_\oR(\GG_t)$).
Moreover,
\begin{equation}\label{eq2}
|\BB_2|=\rank M_{s-1}(L^*) \le \dim \pi_{s-1}(\GG_t^\perp)\ (=|\BB_1|).
\end{equation}
Indeed, as $\Span_\oR(\GG_t)\cap\PR_{s-1}\subseteq J\cap \PR_{s-1} \subseteq \Ker M_{s-1}(\tilde L)
=\Ker M_{s-1}(L^*)$, we obtain $\Span_\oR(\GG_t)\cap\Span_\oR(\BB_2)=\{0\}$, which implies
$|\BB_2|\le \dim (\Span_\oR(\GG_t)\cap\PR_{s-1})^\perp=\dim\pi_{s-1}(\GG_t^\perp)$.
Hence, equality holds in (\ref{eq1}) and (\ref{eq2}). 
Therefore, $\BB_1$ indexes a column basis of $M_{s-1}(L^*)$, 
$\BB_2$ satisfies (\ref{dims-1}), and 
$$\rank M_{s-1}(L^*)=\dim\pi_{s-1}(\GG_t^\perp)=\dim\PR/J.$$
As $J\subseteq \Ker M(\tilde L)$, we deduce
$$\dim \PR/\Ker M(\tilde L)\le \dim\PR/J.$$
On the other hand,
$$ \dim\PR/J =\rank M_{s-1}(L^*)=\rank M_{s-1}(\tilde L)\le \rank M(\tilde L)=\dim \PR/\Ker M(\tilde L).$$
Hence equality holds throughout.
In particular, $J=\Ker M(\tilde L)$ and 
$\rank M(\tilde L)=\rank M_{s-1}(\tilde L)$. As $M_{s-1}(\tilde L)=M_{s-1}(L^*)\succeq 0$, we deduce that $M(\tilde L)\succeq 0$ and 
$J= \Ker M(\tilde L)=(\Ker M_s(\tilde L))=(\Ker M_s(L^*))$ is a real radical ideal (using
Theorem \ref{theoA}).
Finally, if $s\ge D$, then $J=(\Ker M_s(L^*))=\sqrt[\oR]I$ by Theorem \ref{thmrank}. 
This concludes the proof of Proposition \ref{prop2}.

\subsection{Two illustrative examples}
We discuss two simple examples  to illustrate the various notions just introduced and the role of moment matrices;
the second one has infinitely many complex roots.

\begin{example} \label{exnonGor}
Let $I=( x_1^2,x_2^2,x_1x_2)\subseteq \oR[x_1,x_2]$, considered in \cite{LLR08} as an example with a non-Gorenstein algebra 
$\PR/I$.
Any $L\in \KK_t$ ($t\ge 2$) satisfies $L(\xx^\alpha)=0$ if $|\alpha|\ge 2$ and thus
$$
M_\ttt(L) = \left(\begin{matrix} a & b & c & 0& \ldots \cr
b & 0 & 0 & 0 &\ldots \cr
c & 0 & 0 & 0 &\ldots \cr
0 & 0 & 0 & 0 &\ldots \cr
\vdots & \vdots & \vdots & \vdots & \ddots \end{matrix}\right)\ \ \text{ for some scalars } a,b,c, $$
where entries are indexed by $1,x_1,x_2,\ldots$
Hence, $\dim \pi_2(\KK_2)=\dim \pi_1(\KK_2)=\dim \pi_2(\KK_3)\ =3$ 
and the rank stabilizes at order $(t,s)=(4,2)$, i.e.
$ \rank M_2(L^*)=\rank M_1(L^*)  =2$ for generic $L^*\in \KK_4$.
When $L\in\KK_{t,\succeq}$,  the condition $M_\ttt(L)\succeq 0$ implies $b=c=0$.
Hence, for generic $L^*\in\KK_{2,\succeq}$, $\NNN_2:=\Ker M_1(L^*)$ is spanned by the polynomials $x_1$ and $x_2$, and the rank condition (\ref{flat1}) holds at order
$(t,s)=(2,1)$, i.e. 
$\rank M_1(L^*)=\rank M_0(L^*) =1$.
As $\Span_\oR(\GG_2)$ is spanned by the polynomials $x_1,x_2,x_1^2,x_1x_2,x_2^2$,
the conditions 
\eqref{ZR++a}--\eqref{ZR++b} hold at the same order $(t,s)=(2,1)$, i.e.
$\dim \pi_2(\GG_2^\perp) = \dim \pi_0(\GG_2^\perp)=\dim \pi_2((\GG_2^+)^\perp) = 1,$
as predicted by Proposition \ref{prop2}.
\end{example}

\begin{example}\label{ex2}
Consider the ideal $I=( x_1^2+x_2^2)\subseteq \oR[x_1,x_2]$ with $V_\oR(I)=\{0\}$ and $|V_\oC(I)|=\infty$.
As $\dim\pi_s(\KK_t) =\dim \pi_{s-1}(\KK_t)+2$ for any $t\ge s\ge 2$, the conditions (\ref{ZRa})--(\ref{ZRb}) never hold in the case $\GG=\HH_t$.
On the other hand, any $L\in \KK_{2,\succeq}$ satisfies $L(x_1^2)=L(x_2^2)=0$,
which follows from $L(x_1^2+x_2^2)=0$ combined with $M_1(L)\succeq 0$, giving $L(x_1^2),L(x_2^2)\ge 0$. 
Moreover, $L(x_1)     =L(x_2)=L(x_1x_2)=0$. 
Thus   $\NNN_2$ is spanned by  the polynomials
 $x_1$ and $x_2$, and the
conditions (\ref{flat1}) and (\ref{ZR++a})-(\ref{ZR++b}) 
hold at order $(t,s)=(2,1)$.
\end{example}

Examples \ref{ex::cox98} and \ref{gauss} in Section \ref{sec::numexamples} are cases
where the prolongation-projection method terminates earlier than the moment-matrix method.

%
%
%
%

\section{A prolongation-projection algorithm}
\label{sec::algorithm}
Let us now give a brief description of our algorithm for computing $V_\oK(I)$ 
($\oK=\oR,\oC$)
 based on the results of the previous section.
A simple adjustment in the proposed prolongation-projection algorithm allows the computation
 of all complex vs. real roots. The general structure is shown in Algorithm~\ref{alg::numsyb_Vk}.
If $I$ is an ideal given by a set of generators and
$|V_\oK(I)|<\infty$, 
this algorithm computes the multiplication matrices in $\PR/J$, which thus allows the immediate computation of $V_\oK(J)$
(by Theorem~\ref{thm::muloperator}), 
where $J$ is a zero-dimensional ideal satisfying $J=I$ if $\oK=\oC$ and $I \subseteq J \subseteq \sqrt[\oR]{I}$ if $\oK=\oR$, so that
$V_\oK(J)=V_\oK(I)$.
We then comment on the key steps involved in the algorithm.

\begin{algorithm}
\caption{\emph{Unified prolongation-projection algorithm for computing $V_\oK(I)$:}}
\begin{algorithmic}[1]
\Require A set $\{h_1,\ldots,h_m\}$ of generators of $I$ and $t\ge D$.
\Ensure The multiplication matrices in $\PR/J$, where $J=I$ if $\oK=\oC$ and $I\subseteq J\subseteq \sqrt[\oR]I$ if $\oK=\oR$, thus enabling the computation of $V_\oK(I)$.
\State Compute the matrix representation $G_t$ of $\GG_t$ and $G_t^+$ of $\GG_t^+$.
\State Compute $\Ker G_t$ and $\Ker G_t^+$.
\State Compute $\dim \pi_s(\Ker G_t)$ ($=\dim \pi_s((\GG_t)^\perp)$) and 
$\dim \pi_s(\Ker G_t^+)$ ($=\dim \pi_s((\GG_t^+)^\perp)$)
 for $s \leq t$.
\State Check if \eqref{ZRa}--\eqref{ZRb} holds for some $D\le s\le \ttt$.
\If{yes}
\State \Return a basis $\BB\subseteq \PR_{s-1}$ connected to 1 and satisfying  (\ref{dims-1}),
and the multiplication matrices $\XX_i$ in $\PR/J$ represented in the basis $\BB$.
\Else
\State Iterate (go to 1) replacing $t$ by $t+1$.
\EndIf
\end{algorithmic}
\label{alg::numsyb_Vk}
\begin{remark} Here, $\GG_t=\HH_{t}$ (see (\ref{setHt})) for the task of computing $V_\oC(I)$,
and $\GG_t=\HH_{t}\cup \WW_t$ (see (\ref{eqn::Wt})) 
for the task of computing $V_\oR(I)$. See below for details about the matrix representations $G_t$ and $G_t^+$.
\end{remark}
\end{algorithm}


\subsubsection*{Characterizing $\GG_t$ and $\GG_t^\perp$ via the matrix $G_t$}\label{step1}

In the real case, the set $\GG_t$ is defined as $\GG_t=\HH_t\cup\WW_t$ where $\WW_t$ is the linear space defined in (\ref{eqn::Wt}). As we are interested in the orthogonal space $\GG_t^\perp$, it suffices to compute a basis $\CC_t$ of the linear space
$\NNN_t$ 
and to define the set
\begin{align}\label{eqn::St}
\SSS_t:=\{\xx^\alpha g \mid |\alpha|\le \ttt, g\in \CC_t\}.
\end{align}
Then, $\NNN_t=\Span_\oR(\CC_t)$, $\WW_t=\Span_\oR(\SSS_t)$, and 
$\GG_t^\perp= (\HH_t\cup \SSS_t)^\perp$. 
Let $S_t$ (resp., $H_t$) be the matrix with columns indexed by $\oT_t^n$ and 
whose rows are the coefficient vectors of the polynomials in  $\SSS_t$
(resp., in $\HH_t$).
In the case $\oK=\oC$, the set $\GG_t=\HH_t$ is represented by the matrix $G_t:=H_t$ and, in the case $\oK=\oR$, the set $\GG_t=\HH_t\cup\WW_t$ is represented by the matrix $$G_t := \left[\begin{array}{c}H_t \\
S_t \end{array} \right].$$
Then the vectors in $\Ker G_t$ are precisely the coordinate vectors in the canonical basis of $(\PR_t)^*$ of the linear forms in $\GG_t^\perp$, i.e.
\begin{align}\label{relker}
L\in \GG_t^\perp \Longleftrightarrow (L(\xx^\alpha))_{|\alpha|\le t}\in \Ker G_t.
\end{align}
Analogously, $G_t^+$ is the matrix representation of 
$(\HH_t\cup\SSS_t)^+$, so that $(\GG_t^+)^\perp$ corresponds to
$\Ker G_t^+$.

To compute the space $\NNN_t$ we need a generic element $L^*\in \KK_{t,\succeq}$. 
How to find such a generic element has been discussed in detail in~\cite[Section 4.4.1]{LLR07}. Let us only mention here that this task can be performed numerically using a 
standard semidefinite programming solver implementing a self-dual embedding strategy, see e.g.~\cite[Chapter 4]{K02}. 
For our computations  we use the  SDP solver SeDuMi~\cite{St99}.

\subsubsection*{Computing $\pi_s(\GG_t^\perp)$ and its dimension}

As shown in (\ref{relker}), the dual space $\GG_t^\perp$ can be characterized in the canonical dual basis as the kernel of the matrix $G_t$, see e.g. \cite{ReZh04} for details using an algorithm based on singular value decomposition. Faster implementations can be obtained e.g. using Gauss elimination. 
%
%
Once we have a basis of $\Ker G_t$, denoted say by 
 $\{z_1,\ldots,z_M\}$, then, for any $s\le t$, we construct the matrix $Z_s$ whose rows are
 the vectors $\pi_s(z_1),\ldots, \pi_s(z_M)$, the projections onto $\oR^n_s$ of $z_1,\ldots,z_M$. 
Then $\dim \pi_s(\GG_t^\perp)=\dim \pi_s(\Ker G_t)$ is equal to the rank of the matrix $Z_s$. 
%

\subsubsection*{Extracting solutions}
In order to extract the variety $V_\oK(I)$, we apply Theorem~\ref{thm::muloperator} which thus requires a basis $\BB$ of the quotient space and the corresponding multiplication matrices.
In the setting of Theorem \ref{theoZR}, $\rank Z_s=\rank Z_{s-1}=:N$ and $\BB$ is chosen such that $\BB\subseteq \oT^n_{s-1}$ indexes $N$ linearly independent columns of $Z_{s-1}$.
A first possibility to construct $\BB$ is to use a greedy algorithm as explained in the proof of Lemma \ref{lemstable}. 
Another possibility is to use Gauss-Jordan elimination with partial pivoting on $Z_s$ (see~\cite{HeLa05}) such that each column corresponding to a monomial of degree $s$ is expressed as a linear combination of $N$ monomials of degree at most $s-1$.
 The pivot variables form a set $\BB\subseteq \oT^n_{s-1}$ indexing a maximum set of linearly independent columns of $Z_s$ and their corresponding monomials serve as a (monomial) basis $\BB$ of the quotient space (provided $\BB$ is connected to 1). 
The reduced row echelon form of $Z_s$, interpreted as coefficient vector for some polynomials, gives the desired rewriting family, which thus enables the construction of multiplication matrices and provides a border (or Gr\"obner) basis (cf. \cite{LLR07} for details).

A second alternative proposed in~\cite{ReZh04} is to use singular value decomposition once more to obtain a basis of $\Ker Z_s$ and therefore a polynomial basis $\BB$ for the quotient ring (see~\cite{ReZh04} for details). All examples presented in the next section are computed using singular value decomposition.

\section{Numerical Examples}
\label{sec::numexamples}
We now illustrate the prolongation-projection algorithm on some simple examples. The algorithm has been implemented in Matlab using the Yalmip toolbox~\cite{YALMIP}. For the real-root prolongation-projection algorithm,
we show the dimensions of $\pi_s(\GG_t^\perp)$ and $\pi_s((\GG_t^+)^\perp)$, the projections 
of the orthogonal complement of the set $\GG_t = \HH_t \cup \WW_t$ and of its one degree prolongation. For comparison, we also sometimes show the dimension table for the complex-root
version of this algorithm, and we show the values $\rank{M_s(L^*)}$ ($s\le \ttt$)
for a generic element $L^*\in \KK_{t,\succeq}$ used in the real moment-matrix method. To illustrate the potential savings, and at the same time facilitate the comparison between  the various methods, we sometimes give more data than needed for the real root computation (then displayed in color gray). We also provide the extracted roots $v \in V_\oK(I)$ and,
as a measure of accuracy, the maximum evaluation  $\epsilon(v) = \max_{j} |h_j(v)|$ taken over all input polynomials $h_j$ at the
 extracted root $v$, as well as the commutativity error $c(\XX):= \max_{i,j=1}^n \text{abs}(\XX_{i}\XX_{j}-\XX_{j}\XX_{i})$ of the computed multiplication matrices $\XX_{i}$.


\begin{example}
\label{ex::cox98}
Consider the ideal $I =(h_1,h_2,h_3)\subseteq \oR[x_1,x_2,x_3]$, where
\begin{align*}
h_1&=x_1^2-2x_1x_3+5\,,\\
h_2&=x_1x_2^2+x_2x_3+1\,,\\
h_3&=3x_2^2-8x_1x_3\,,
\end{align*}
with $D=3$, $|V_\oC(I)|=8$ and $|V_\oR(I)|=2$, 
taken from~\cite[Ex. 4, p.57]{CLO98}. 
We illustrate and compare the various algorithms on this example.

\smallskip
\begin{table}[htpb]
	\centering
	
	\begin{tabular}{c|cccccccccc}
\multicolumn{1}{r}{ \hfill $s\;=$} & 0 & 1 & 2 & 3 & 4 & 5&6&7&8&9 \\
\hline 
\hline
$\dim \pi_s(\KK_3)$   & 1 & 4 & 8 & 11 & --- & --- &---&---&---&---\\
\hline 
$\dim \pi_s(\KK_4)$   & 1 & 4 & 8 & 10 & 12 & ---  & ---&---&---&---\\
\hline
$\dim \pi_s(\KK_5)$   & 1 & 4 & 8 & 9 & 10 & 12 &---&---&---&---\\
\hline 
$\dim \pi_s(\KK_6)$   & 1 & 4 & \textbf{8} & \textbf{8} &  \color{medgray}9 &  \color{medgray}10  &  \color{medgray} 12 &---&---&---\\
\hline
$\dim \pi_s(\KK_7)$   & 1 & 4 & 8 & \textbf{8} & \color{medgray}8 & \color{medgray}9  & \color{medgray}10 & \color{medgray}12&---&---\\
\hline
\color{medgray} $\dim \pi_s(\KK_8)$   & \color{medgray} 1 & \color{medgray} 4 & \color{medgray} 8 & \color{medgray} 8 & \color{medgray} 8 & \color{medgray} 8  & \color{medgray} 9 & \color{medgray} 10 &\color{medgray} 12&---\\
\hline
\color{medgray} $\dim \pi_s(\KK_9)$   & \color{medgray} 1 & \color{medgray} 4 & \color{medgray} 8 & \color{medgray} 8 & \color{medgray} 8 & \color{medgray} 8  &\color{medgray} 8  & \color{medgray} 9 & \color{medgray} 10 &\color{medgray} 12\\
\hline
\end{tabular}
\caption{Dimension table for $\pi_s(\KK_t)$ in Example~\ref{ex::cox98}.\label{tab::complexalg}}
\end{table}

Table~\ref{tab::complexalg} shows the dimensions of the sets $\pi_s(\KK_t)$ for various
prolongation-projection orders $(t,s)$.
Note that the conditions (\ref{ZRa})--(\ref{ZRb})
hold at order $(t,s)=(6,3)$, i.e. 
\begin{align*}
\dim \pi_3(\KK_6)=\dim \pi_{2}(\KK_6)=\dim \pi_{3}(\KK_7).
\end{align*}
With the complex-root prolongation-projection algorithm we can 
 compute the 
 following eight  complex roots:
\begin{align*}
v_1 &=\left[\begin{array}{ccc} -1.10 &    -2.88 &    -2.82 \end{array} \right] \,,\\
v_2 &=\left[\begin{array}{ccc} 0.0767+2.243i      &    0.461+0.497i   &     0.0764+0.00834i  \end{array} \right]\,,\\
v_3 &=\left[\begin{array}{ccc} 0.0767-2.243i       &   0.461-0.497i     &   0.0764-0.00834i  \end{array} \right]\,,\\
v_4 & =\left[\begin{array}{ccc} -0.0815-0.931i   &       2.35+0.0431i     &    -0.274+2.209i     \end{array}\right]\,,\\
v_5 & =\left[\begin{array}{ccc} -0.0815+0.931i    &      2.35-0.0431i    &     -0.274-2.20i     \end{array} \right]\,, \\
v_6 & =\left[\begin{array}{ccc} 0.0725+2.24i     &     -0.466-0.464i    &    0.0724+0.00210i   \end{array} \right] \,,\\
v_7 &=\left[\begin{array}{ccc}  0.0725-2.24i     &     -0.466+0.464i    &    0.0724-0.00210i   \end{array} \right] \,,\\
v_8 &=\left[\begin{array}{ccc} 0.966   &  -2.81  &    3.07 \end{array} \right]\,,
\end{align*}
with a maximum error of $\max_i \epsilon(v_i) < 8 \text{e-13}$ and commutativity error $c(\XX) <\text{6e-13}$.
%
%
%

\smallskip
\begin{table}[htpb]
\centering
\begin{tabular}{c|cccccccc}
\multicolumn{1}{r}{ \hfill $s\;=$} & 0 & 1 & 2 & 3 & 4 & 5&6 &7\\
\hline 
\hline
$\dim \pi_s(\GG_3^\perp)$   & 1 & 4 & 8 & 11 & --- & --- &---&---\\
$\dim \pi_s((\GG_3^+)^\perp)$ & 1 & 4 & 8 & 10 & 12 & ---  & ---&---\\
\hline 
$\dim \pi_s(\GG_4^\perp)$   & 1 & 4 & 8 & 10 & 12 & --- &---&---\\
$\dim \pi_s((\GG_4^+)^\perp)$ & 1 & 4 & 8 & 9 & 10 & 12 & ---&---\\
\hline 
$\dim \pi_s(\GG_5^\perp)$   & 1 & \textbf{2} & \textbf{2} & \color{medgray}2 &\color{medgray} 3 &\color{medgray} 5 &---&---\\
$\dim \pi_s((\GG_5^+)^\perp)$ & 1 & 2 & \textbf{2} & \color{medgray}2 & \color{medgray}3 & \color{medgray}4  & \color{medgray}6&---\\
\hline
$\dim \pi_s(\GG_6^\perp)$    &\color{medgray}1&  \color{medgray}2 & \color{medgray}2  &\color{medgray}2  &\color{medgray}2  &\color{medgray}2  &\color{medgray}3&---\\
$\dim \pi_s((\GG_6^+)^\perp)$ & \color{medgray}1&  \color{medgray}2&  \color{medgray}2 & \color{medgray}2 & \color{medgray}2 & \color{medgray}2 &\color{medgray} 2 & \color{medgray}3\\
\hline
\end{tabular}
\caption{Dimension table for $\pi_s(\GG_t^\perp)$ and $\pi_s((\GG_t^+)^\perp)$ in Example~\ref{ex::cox98}.\label{tab::realalg}}
\end{table}

Table~\ref{tab::realalg} shows the dimensions of the sets $\pi_s(\GG_t^\perp)$ and $\pi_{s}((\GG^+_t)^\perp)$ for various prolongation-projection orders
$(t,s)$.
Note that the conditions \eqref{ZRa}--\eqref{ZRb} hold at order $(t,s)=(5,2)$, i.e. 
\begin{align*}
\dim \pi_{2}(\GG_5^\perp)=\dim \pi_{1}(\GG_5^\perp)=\dim \pi_{2}((\GG^+_5)^\perp).
\end{align*}
With the real-root prolongation-projection algorithm we can 
 extract the two real solutions:
\begin{align*}
v_1 &=\left[\begin{array}{ccc} -1.101  &    -2.878 &    -2.821 \end{array} \right]\,, \\
v_2 &=\left[\begin{array}{ccc} 0.966  &   -2.813  &    3.072 \end{array} \right]\,,
\end{align*}
with $\max_i \epsilon(v_i) < 2 \text{e-8}$ and commutativity error $c(\XX) <\text{3.3e-9}$.
Note that, since $2=s<D=3$, we cannot directly apply Theorem~\ref{theoZR} 
to claim $V_\oR(I)=V_\oC(J)\cap\oR^n$.
Instead, as indicated in Remark~\ref{remweak}~(i), we can only claim  $V_\oC(J)\cap\oR^n  \supseteq V_\oR (I)$.
However, equality can be verified by 
evaluating  the input polynomials $h_j$ at the points $v \in V_\oC(J)\cap\oR^n$.
Anyway, one can also observe that the conditions \eqref{ZRa}--\eqref{ZRb} hold at order
$(t,s)=(5,3)$, in which case one can directly conclude  $V_\oR(I)=V_\oC(J)\cap\oR^n$.
Finally, we can even conclude $J=\sqrt[\oR]I$ since $\dim\pi_s(\GG_t^\perp)=|V_\oR(I)|$ (using the last claim in Theorem \ref{theoZR}).

The ranks of the moment matrices involved in the computation are shown in Table~\ref{tab::excox_rank}. Observe that the rank condition (\ref{flat1}) holds at order $(t,s)=(6,2)$, i.e.
$$\rank M_2(L^*)=\rank M_1(L^*) \quad \text{ for generic } L^*\in\KK_{6,\succeq}.$$
 (To be precise, as $2=s<D=3$, we use \cite[Prop. 4.1]{LLR07} and check whether 
the extracted roots belong to $V_\oR(I)$ afterwards.)
\begin{table}[ht!]
\begin{tabular}{c||ccccc}
 & $s=0$ & $s=1$ & $s=2$& $s=3$\\
\hline 
\hline
$t = 3$   & 1 & 4 & ---& ---\\
\hline
$t=4$     & 1 & 4 & 8  & ---\\
\hline
$t=5$     & 1 & 2 & 8  & ---\\
\hline
$t=6$     & 1 & \textbf{2} & \textbf{2}  & \color{medgray} 10  \\
\hline 
\end{tabular}
\caption{Showing $\rank M_{s}(L^*)$ for generic $L^*\in \KK_{t,\succeq}$ in Example~\ref{ex::cox98}.\label{tab::excox_rank}}
\end{table}

In this small example, we see that we can improve efficiency over the general complex-root algorithm if we are only interested in computing the real roots. 
Indeed the prolongation-projection algorithm terminates at order $(t,s)=(5,2)$ in the real case while it terminates at order $(6,3)$ in the complex case, however at the 
price of solving an SDP in the real case.
Moreover, compared to the real-root moment-matrix algorithm of \cite{LLR07}, we save the computation of the last moment matrix $M_3(L^*)$ for $L^* \in\KK_{6,\succeq}$. 

Modifying the above example by replacing each polynomial $h_i$ by $h_i\cdot(1+ \sum_i x_1^2)$ yields an example with a positive
dimensional complex variety, while the real variety is unchanged. The proposed algorithm still converges, this time at order $(t,s) = (7,2)$ and allows the extraction of the two real roots.
\end{example}

\begin{example}\label{cox3}
Consider the ideal $I=(h_1,h_2,h_3)\subseteq \oR[x_1,x_2]$, where
\begin{align*} h_1&= x_2^4 x_1+3 x_1^3-x_2^4-3 x_1^2,\\
h_2&=x_1^2 x_2-2 x_1^2,\\
h_3&=2 x_2^4 x_1-x_1^3-2 x_2^4+x_1^2,
\end{align*}
and $D=5$, taken from \cite[p.40]{CLO98}.
The corresponding variety consists of two (real) points, one of which has
 multiplicity 8.

\begin{table}[ht!]
\begin{tabular}{c|ccccccccc}
\multicolumn{1}{r}{ \hfill $s\;=$} & 0 & 1 & 2 & 3 & 4 & 5&6&7 \\
\hline 
\hline
$\dim \pi_s(\GG_5^\perp)$   & 1 & 3 & 5 & 6 & 8 & 10 &---&---\\
$\dim \pi_s((\GG_5^+)^\perp)$ & 1 & 3 & 5 & 6 & 6 & 8  & 10&---\\
\hline 
$\dim \pi_s(\GG_6^\perp)$   & 1 & \textbf{2} & \textbf{2} & \textbf{2} & \textbf{2} & \textbf{2}  & \color{medgray}4 &---\\
$\dim \pi_s((\GG_6^+)^\perp)$ & 1 & 2 & \textbf{2} & \textbf{2} & \textbf{2} & \textbf{2}  & \color{medgray}2 & \color{medgray}4 \\
\hline
\end{tabular}
\caption{Dimension table for $\pi_s(\GG_t^\perp)$ and $\pi_s((\GG_t^+)^\perp)$ in Example~\ref{cox3}.\label{tab::ex1_dimG}}
\end{table}
\noindent
Table~\ref{tab::ex1_dimG} shows the dimensions of the projections of the sets $\GG_t^\perp$ and $(\GG_t^+)^\perp$.
The conditions \eqref{ZRa}--\eqref{ZRb} hold at order $(t,s)=(6,s)$ with $2\le s\le 5$, i.e.
\begin{align*}
\dim \pi_s(\GG_6^\perp)=\dim \pi_{s-1} (\GG_6^\perp) =\dim \pi_s((\GG^+_6)^\perp) \
\ \text{ for } 2\le s\le 5,
\end{align*} 
the conditions~\eqref{ZR++a}--\eqref{ZR++b} hold at order $(t,s)=(6,2)$, i.e.
\begin{align*}
\dim \pi_{1}(\GG_6^\perp)=\dim \pi_{4}(\GG_6^\perp)=\dim \pi_{4}((\GG^+_6)^\perp),
\end{align*}
and the extracted roots are
\begin{align*}
v_1&=\left[\begin{array}{cc}-6.17\text{e-6}&1.10\text{e-5}\end{array} \right]\\
v_2&=\left[\begin{array}{cc}0.9988 & 1.9998\end{array} \right]
\end{align*}
with an accuracy of $\epsilon(v_1) < 2\text{e-10}$ and $\epsilon(v_2) < 4\text{e-3}$ and maximum commutativity error $c(\XX) <3\text{e-5}$.

\noindent
The ranks of the moment matrices involved in the computations are shown in Table~\ref{tab::ex1_rank}. 
As predicted by Proposition~\ref{prop2}, condition (\ref{flat1}) 
holds at order $(t,s)=(6,2)$, i.e. 
$$\rank M_2(L^*)=\rank M_1(L^*)\quad \text{  for generic } L^*\in \KK_{6,\succeq}.$$
Moreover, the returned ideal $J$ satisfies $J=(\Ker M_1(L^*))=\sqrt[\oR]I$.
\begin{table}[ht!]
\begin{tabular}{c||cccc}
 & $s=0$ & $s=1$ & $s=2$& $s=3$\\
\hline 
\hline
$t = 5$   & 1 & 3 & 5 &---\\
\hline
$t=6$ & 1 & \textbf{2} & \textbf{2} & 4 \\
\hline 
\end{tabular}
\caption{Showing $\rank M_s(L^*)$ for generic $L^*\in \KK_{t,\succeq}$ in Example~\ref{cox3}.\label{tab::ex1_rank}}
\end{table}
Table~\ref{tab::ex1_dimK} shows the dimensions of the projections $\pi_s(\KK_t)$ for the complex-root prolongation-projection algorithm.
The conditions~\eqref{ZRa}--\eqref{ZRb} are satisfied at order $(t,s) = (7,5)$, allowing (in principle) to extract the two roots with their corresponding multiplicities. The appearance of multiple roots requires a careful choice of the extraction procedure using multiplication operators. We employ the approach described in~\cite{CPGT97} using reordered Schur factorization. At order $(t,s) = (7,5)$, numerical problems prevent a successful extraction despite this algorithm. However, at order $(t,s) = (8,5)$, the multiplication matrices (on which the reordered Schur factorization method is applied) have a commutativity error of $c(\XX)<6.25\text{e-16}$.
Thus, we can extract the root
\begin{align*}
v &=\left[\begin{array}{cc} 1  &    2 \end{array} \right]\, 
\end{align*}
with accuracy $\epsilon(v) < 1.38\text{e-14}$ and the 8-fold root at the origin with an even higher accuracy of 
$\epsilon(v_i) < 1.75\text{e-32}$.

Note that the real version of this algorithm, working directly with the real radical of the ideal, does not require these considerations as it eliminates multiplicities.
\begin{table}[ht!]
\begin{tabular}{c|ccccccccccc}
\multicolumn{1}{r}{ \hfill $s\;=$} & 0 & 1 & 2 & 3 & 4 & 5&6&7&8 &9&10\\
\hline 
\hline
$\dim \pi_s(\KK_5)$   & 1 & 3 & 6 & 8 & 11 & 13 &---&---&---&---&---\\
\hline 
$\dim \pi_s(\KK_6)$   & 1 & 3 & 6 & 8 & 9 & 11  & 13 &---&---&---&---\\
\hline
$\dim \pi_s(\KK_7)$   & 1 & 3 & 6 & 8 & \textbf{9} & \textbf{9}  & \color{medgray} 11 & \color{medgray} 13&---&---&---\\
\hline
$\dim \pi_s(\KK_8)$   & 1 & 3 & 6 & 8 & 9          & \textbf{9}  & \color{medgray}9 & \color{medgray}11 & \color{medgray}13&---&---\\
\hline
$\dim \pi_s(\KK_9)$   &  \color{medgray} 1 & \color{medgray} 3 & \color{medgray} 6 & \color{medgray} 8 & \color{medgray} 9 & \color{medgray} 9 & \color{medgray} 9 & \color{medgray} 9 & \color{medgray} 11 & \color{medgray} 13&---\\
\hline
$\dim \pi_s(\KK_{10})$  &  \color{medgray} 1 & \color{medgray} 3 & \color{medgray} 6 & \color{medgray} 8 & \color{medgray} 9 & \color{medgray} 9 & \color{medgray} 9 & \color{medgray} 9 & \color{medgray} 9 & \color{medgray} 11 & \color{medgray} 13\\
\hline
\end{tabular}
\caption{Dimension table for $\pi_s(\KK_t)$ in Example~\ref{cox3}.\label{tab::ex1_dimK}}
\end{table}

\end{example}

\begin{example} \label{gauss}
This example is taken from \cite{VeGa95} and 
represents a Gaussian quadrature formula with two 
weights and two knots, namely,  $I=( h_1,\ldots,h_4)$, where
 \begin{align*} h_1& = x_1 + x_2 - 2 \,,\\
 h_2&=x_1x_3 + x_2 x_4 \,,\\
 h_3&=x_1x_3^2 + x_2 x_4^2 - \frac{2}{3}  \,,\\
 h_4&=x_1 x_3^3 + x_2 x_4^3, 
\end{align*}
with $D=4$ and $|V_\oR(I)| = |V_\oC(I)| = 2$.
Table~\ref{tab::ex2_dimG} shows the dimensions for the projections of the sets $\GG_t^\perp$ and $(\GG_t^+)^\perp$
and  Table~\ref{tab::ex2_rank} shows the ranks of the moment matrices $M_s(L^*)$ for generic $L^*\in \KK_{t,\succeq}$.
The conditions~\eqref{ZRa}--\eqref{ZRb} hold at order $(t,s)=(5,2)$ and the extracted roots are
\begin{align*}
v_1&=\left[\begin{array}{cccc}1& 1& -0.5774& 0.5774 \end{array} \right] \\
v_2&=\left[\begin{array}{cccc}1& 1& 0.5774& -0.5774 \end{array} \right].
\end{align*}
with an accuracy of $\epsilon(v_1) < 2\text{e-11}$ and $\epsilon(v_2) < 2\text{e-11}$ and maximum commutativity error $c(\XX) <\text{4e-14} $. 
Here again the algorithm returns the ideal $J=\sqrt[\oR]I$, since $\dim\pi_2(\GG_5^\perp) = |V_\oR(I)|=2$.
On the other hand, the moment-matrix algorithm of \cite{LLR07} terminates at order $(t,s)=(6,2)$, thus later than the prolongation-projection algorithm.

\begin{table}[ht!]
\begin{tabular}{c|cccccccc}
\multicolumn{1}{r}{ \hfill $s\;=$} & 0 & 1 & 2 & 3 & 4 & 5&6 &7\\
\hline 
\hline
$\dim \pi_s(\GG_4^\perp)$   & 1 & 3 & 7 & 11 & 20 & --- &---& ---\\
$\dim \pi_s((\GG_4^+)^\perp)$ & 1 & 3 & 4 &  8 & 12 & 23 & ---& ---\\
\hline 
$\dim \pi_s(\GG_5^\perp)$   & 1 & \textbf{2} & \textbf{2} &\color{medgray} 2 &\color{medgray} 5 & \color{medgray} 16  & ---& ---\\
$\dim \pi_s((\GG_5^+)^\perp)$ & 1 & 2 & \textbf{2} &\color{medgray}2 & \color{medgray}5 & \color{medgray}9  & \color{medgray}22& ---\\
\hline
$\dim \pi_s(\GG_6^\perp)$ &\color{medgray}1  & \color{medgray}2 & \color{medgray}2 & \color{medgray} 2  &\color{medgray} 2  &\color{medgray}16 & \color{medgray}18& ---\\
 $\dim \pi_s((\GG_6^+)^\perp)$ &\color{medgray}1 &\color{medgray} 2 & \color{medgray}2 &\color{medgray} 2 &\color{medgray} 2 &\color{medgray} 2 &\color{medgray} 2 &\color{medgray} 2\\
 \hline
\end{tabular}
\caption{Dimension table for $\pi_s(\GG_t^\perp)$ and $\pi_s((\GG_t^+)^\perp)$ in Example~\ref{gauss}.\label{tab::ex2_dimG}}
\end{table}
\begin{table}[ht!]
\begin{tabular}{c||ccccc}
 & $s=0$ & $s=1$ & $s=2$&$s=3$\\
\hline 
\hline
$t = 4$   & 1 & 4 & 9 &---\\
\hline
$t=5$     & 1 &2 &5&--- \\
\hline 
 $t=6$ & \color{medgray}1 &\color{medgray}\textbf{2} &\color{medgray}\textbf{2}&\color{medgray}9 \\
\hline
\end{tabular}
\caption{Showing $\rank M_s(L^*)$ for generic  $L^*\in \KK_{t,\succeq}$ in Example~\ref{gauss}.\label{tab::ex2_rank}}
\end{table}
\end{example}

\begin{example}
\label{katsura5}
The following 6-dimensional system is taken from
\begin{center}
\verb1http://www.mat.univie.ac.at/~neum/glopt/coconut/Benchmark/Library3/katsura5.mod1 
\end{center} 
and is known under the name Katsura 5:
\begin{align*} h_1&=2 x_6^2+2 x_5^2+2 x_4^2+2 x_3^2+2 x_2^2+x_1^2-x_1\,,\\
h_2&=x_6 x_5+x_5 x_4+2 x_4 x_3+2 x_3 x_2+2 x_2 x_1-x_2 \,,\\
h_3&=2 x_6 x_4+2 x_5 x_3+2 x_4 x_2+x_2^2+2 x_3 x_1-x_3\,,\\
h_4&=2 x_6 x_3+2 x_5 x_2+2 x_3 x_2+2 x_4 x_1-x_4 \,,\\
h_5&=x_3^2+2 x_6 x_1+2 x_5 x_1+2 x_4 x_1-x_5 \,,\\
h_6&=2 x_6+2 x_5+2 x_4+2 x_3+2 x_2+x_1-1,
\end{align*}
with $D=2$, $|V_\oC(I)|=32$, and $|V_\oR(I)|=12$.
The projection dimensions  are shown in Table~\ref{tab::ex_katasura5}.

\begin{table}[ht!]
\begin{tabular}{c|cccccccc}
\multicolumn{1}{r}{ \hfill $s\;=$} & 0 & 1 & 2 & 3 & 4 & 5&6 &7\\
\hline 
\hline
$\dim \pi_s(\GG_2^\perp)$     & 1 & 6 & 16 & --- & --- & --- &---& ---\\
$\dim \pi_s((\GG_2^+)^\perp)$ & 1 & 6 & 16 & 26 & --- & --- & ---& ---\\
\hline 
$\dim \pi_s(\GG_3^\perp)$     & 1 & 6 & 16 & 26 & --- & --- &---& ---\\
$\dim \pi_s((\GG_3^+)^\perp)$ & 1 & 6 & 16 &  26 & 31 & --- & ---& ---\\
\hline 
$\dim \pi_s(\GG_4^\perp)$     & 1 & 6 & 16 & 26 & 31 & --- &---& ---\\
$\dim \pi_s((\GG_4^+)^\perp)$ & 1 & 6 & 16 & 26 & 31 & 32 & ---& ---\\
\hline 
$\dim \pi_s(\GG_5^\perp)$     & 1 & 6 & 16 & 26 & 31 & 32 &---& ---\\
$\dim \pi_s((\GG_5^+)^\perp)$ & 1 & 6 & 16 & 26 & 31 & 32 & 32 & ---\\
\hline 
$\dim \pi_s(\GG_6^\perp)$     & 1 & 6 & \textbf{12} & \textbf{12} & \color{medgray} 12& \color{medgray} 12& \color{medgray} 12& ---\\
$\dim \pi_s((\GG_6^+)^\perp)$ & 1 & 6 & 12 &\textbf{12}  & \color{medgray} 12 &\color{medgray} 12  & \color{medgray} 12&\color{medgray} 12\\
\hline
\end{tabular}
\caption{Dimension table for $\pi_s(\GG_t^\perp)$ and $\pi_s((\GG_t^+)^\perp)$ in Example~\ref{katsura5}.\label{tab::ex_katasura5}}
\end{table}

The extracted solution points
\begin{align*}
v_1&=(1, 8.73\text{e-7}, 2.14\text{e-6},  2.48\text{e-7},  2.23\text{e-6}, -1.29\text{e-6})\,,\\
v_2&=(0.277,0.226,0.162,0.0858,0.0115,-0.124)\,,\\
v_3&=(0.136,0.0428,0.0417,0.0404,0.0964,0.211)\,,\\
v_4&=(0.462,0.309,0.0553,-0.102,-0.0844,0.0917)\,,\\
v_5&=(0.441,0.151,0.0225,0.219,0.0935,-0.207)\,,\\
v_6&=(0.239,0.0608,-0.0622,-0.0233,0.186,0.219)\,,\\
v_7&=(0.753,0.0532,0.191,-0.114,-0.146,0.139)\,,\\
v_8&=(0.726,-0.0503,0.122,0.164,0.109,-0.208)\,,\\
v_9&=(0.409,-0.0732,0.0657,-0.127,0.252,0.178)\,,\\
v_{10}&=(0.292,-0.101,0.181,-0.0591,0.193,0.141)\,,\\
v_{11}&=(0.590,0.0422,0.327,-0.0642,-0.0874,-0.0132)\,,\\
v_{12}&=(0.68,0.266,-0.154,0.0323,0.0897,-0.0735)\,,
\end{align*}
were extracted at order $(t,s) = (6,3)$, when conditions~\eqref{ZRa}--\eqref{ZRb} were first satisfied. The maximum evaluation error was found to be $\max_i \epsilon(v_i) < 2.4 \text{e-4}$ and the commutativity error $c(\XX) < \text{6.2e-6}$.
Again the algorithm returns the ideal $J=\sqrt[\oR ]I$ as
$\dim\pi_3(\GG_6^\perp)=|V_\oR(I)|=12$.
In this example the moment-matrix method \cite{LLR07} also extracts the 12 real solutions at order $(t,s)=(6,3)$.
\end{example}

\section{Conclusion}
This work was motivated by the great success of numerical-algebraic methods in recent years. Incorporating features specific to \emph{real root finding} into efficient symbolic-numeric methods may lead to more efficient algorithms for numerically computing all real roots of a given system of polynomials. The contribution of this paper is a first attempt in this direction as it implements real-algebraic
features into the existing symbolic-numeric algorithm described in~\cite{ReZh04}. Concretely, the resulting algorithm
uses semidefinite programming techniques in addition to standard numerical linear algebra techniques. It is not only applicable to zero-dimensional ideals, but to all problems for which the real variety is finite. An extension to zero-dimensional basic semi-algebraic subsets is also possible, along the same lines as in \cite{LLR07}.
%


The new approach relies on a dual space characterization of (an approximation of) the real radical ideal, obtained by combining ideas of \cite{LLR07} and \cite{ReZh04}, but the new prolongation-projection algorithm may terminate earlier than the moment-matrix method of \cite{LLR07}. Although preliminary computational results are encouraging,
whether the characterization at hand can lead to a new
treatment of real-algebraic problems is still to be demonstrated on a larger sample of problems.
An important computational issue is how to efficiently solve the underlying semidefinite program for large problems involving high degree polynomials with many variables. Exploiting sparsity in order to decrease the size of the semidefinite program is a promising direction and the work of Kojima et al. \cite{KoKiWa05} and Lasserre~\cite{Las06a} is a first important step in this direction. Strategies similar to those used in Gr\"obner/border basis computations can be employed to further increase efficiency of the proposed method, particularly in view of the linear algebra steps involved, e.g. the dimension tests.

\section*{Acknowledgements}
We thank two referees for their careful reading and useful suggestions which helped improve the presentation of the paper.

\bibliographystyle{abbrv}

\end{document}